\def\cred{\color{black}}
\newtheorem{Theorem}{Theorem}[section]
\newtheorem{Proposition}[Theorem]{Proposition}
\newtheorem{Assumptions}[Theorem]{Assumptions}
\newtheorem{Lemma}[Theorem]{Lemma}
\newtheorem{Corollary}[Theorem]{Corollary}
\theoremstyle{definition}
\newtheorem{Definition}[Theorem]{Definition}
\newtheorem{Remark}[Theorem]{Remark}
\newcommand{\bTheorem}[1]{
\begin{Theorem} \label{T#1} }
\newcommand{\eT}{\end{Theorem}}
\newcommand{\bProposition}[1]{
\begin{Proposition} \label{P#1}}
\newcommand{\eP}{\end{Proposition}}
\newcommand{\bLemma}[1]{
\begin{Lemma} \label{L#1} }
\newcommand{\eL}{\end{Lemma}}
\newcommand{\bCorollary}[1]{
\begin{Corollary} \label{C#1} }
\newcommand{\eC}{\end{Corollary}}
\renewcommand{\(}{\left(}
\renewcommand{\)}{\right)}
\renewcommand{\u}{\mathbf{u}}
\newcommand{\vv}{\mathbf{v}}
\newcommand{\J}{\mathbf{J}}
\newcommand{\Jh}{\skew{2}\hat{\mathbf{J}}}
\newcommand{\bxi}{\boldsymbol{\xi}}
\newcommand{\LL}{L^2(0,T;L^2(\Omega))}
\newcommand{\CC}{\mathbf{C}}
\newcommand{\DD}{\mathbf{D}}
\newcommand{\I}{\mathbf{I}}
\newcommand{\TT}{\mathbf{T}}
\newcommand{\tr}[1]{\mathrm{tr}\({#1}\)}
\newcommand{\trC}{\mathrm{tr}(\CC)}
\newcommand{\Du}{\mathrm{D}\u}
\renewcommand{\div}[1]{\mathrm{div}\left({#1}\right)}
\newcommand{\di}[1]{\mathrm{div}\,{#1}}
\newcommand{\dib}[1]{\mathrm{div}\big({#1}\big)}
\newcommand{\p}{\partial}
\newcommand{\na}{\nabla}
\newcommand{\R}{\mathbb{R}}
\newcommand{\td}{\frac{\mathrm{d}}{\mathrm{dt}}}
\newcommand{\dx}{\,\mathrm{d}x}
\newcommand{\dt}{\,\mathrm{d}t}
\newcommand{\dta}{\,\mathrm{d}t^\prime}
\DeclarePairedDelimiter{\norm}{\|}{\|}
\DeclarePairedDelimiter{\snorm}{|}{|}
\newcommand\restr[2]{\ensuremath{\left.#1\right|_{#2}}}
\def\softd{{\leavevmode\setbox1=\hbox{d}%
          \hbox to 1.05\wd1{d\kern-0.4ex{\char039}\hss}}}
\def\softt{{\leavevmode\setbox1=\hbox{t}%
			\hbox to 1.05\wd1{t\kern-0.4ex{\char039}\hss}}}
\date{}
\begin{document}

\title{Global existence of weak solutions to viscoelastic\\ phase separation: Part II Degenerate  Case}

\author{Aaron Brunk \and M\' aria Luk\' a\v cov\' a-Medvi{\softd}ov\'a
}

\date{\today}

\maketitle

\bigskip

\centerline{ Institute of Mathematics, Johannes Gutenberg-University Mainz}

\centerline{Staudingerweg 9, 55128 Mainz, Germany}

\centerline{abrunk@uni-mainz.de}
\centerline{lukacova@uni-mainz.de}

\begin{abstract}
\noindent The aim of this paper is to prove  global in time existence of weak solutions for a viscoelastic phase separation. We consider the case with singular potentials and degenerate mobilities. Our model couples the diffusive interface model with the Peterlin-Navier-Stokes equations for viscoelastic fluids.
To obtain the global in time existence of weak solutions we consider appropriate approximations by solutions of the viscoelastic phase separation with a regular potential and build on the corresponding energy and entropy estimates.
\end{abstract}

{\bf Keywords:} two-phase flows, non-Newtonian fluids, Cahn-Hilliard equation, singular potential, Flory-Huggins potential, degenerate mobility, Peterlin viscoelastic model, Navier-Stokes equation, phase separation


\section{Introduction}
 For Newtonian fluids the phase separation of binary fluids is a quite well-understood process.
 On the macroscopic scale the so-called H model \cite{Hohenberg.1977} is typically used, which consists of the conservation of mass and momentum coupled with a nonlinear convection-diffusion equation describing the dynamics of a phase variable $\phi$. The evolution of the phase variable is mainly driven by a gradient flow for the free energy functional. In this context the conservation of the mixture is essential, which means that there is no nucleation or phase transition between the components. In order to avoid discontinuous interfaces a typical choice is the diffusive interface approach which consists of penalizing the mixing energy  by a gradient of the phase variable
 $\phi$. A typical choice of the phase variable $\phi$ is the volume fraction which yields  $\phi\in [0,1]$. \\

Taking viscoelastic effects of the material into account leads to a new challenging problem, since we have to consider additionally multiphase viscoelastic effects. The term \emph{viscoelastic phase separation} has been introduced by Tanaka \cite{Tanaka.}.
{\cred Viscoelastic phase separation governs
phase separation of a dynamically asymmetric mixture, which is composed of fast and slow
components. In dynamically asymmetric mixtures the phase separation leads to new interesting
structures, such as transient formation of network-like structures of a slow-component-rich phase
and its volume shrinking. Tanaka's model couples the H-model with the evolution equation for the viscoelastic stress tensor and the  equation for
the bulk stress.} Unfortunately, the model of Tanaka is not thermodynamically consistent. This drawback has been cured by
Zhou, Zhang, and E who  derived in \cite{Zhou.2006} a thermodynamically consistent model for viscoelastic phase separation. The latter allows the cross-diffusive structure between the phase variable and the bulk stress. In our recent  work \cite{Brunk.} we have analysed the model of Zhou, Zhang, and E for the regular Ginzburg-Landau potential, which is of polynomial type, and proved the existence of global weak solutions.
\\

In the literature one can find other approaches to model dynamics of solvent-polymer mixtures. We refer to
the works of Gr\"un and Metzger~\cite{Grn2016,Grn2017} where a micro-macro model consisting of the
Fokker-Planck type equation describing distribution and orientation of the polymer chains
and the Cahn–Hilliard-Navier–Stokes type equations describing the balance
of mass and momentum was studied and the existence of global weak solutions and some
numerical experiments were presented. In the case of the Hookean spring potential the
macroscopic limit was derived. In \cite{metzger2018} Metzger provides rigourous numerical analysis for a finite element approximation of the above micro-macro model for polymeric flows.  Further generalisations
including different phase densities, varying interface parameters or compressible effects are studied in \cite{Abels.2013,Abels.2008,ABELS2012, Lowengrub.1998}.\\

Due to the conservation of the mixture the convection-diffusion equation describing the evolution of the volume fraction is of the Cahn-Hilliard-type. The latter contains a fourth order differential operator and  does not obey a maximum principle, i.e.~in the case of regular potentials $\phi$ could leave the interval $[0,1]$. Therefore it is crucial to find a maximum-like principle to ensure that the volume fraction remains in the physically reasonable interval $[0,1]$. This leads to the case of
singular potentials, such as the Flory-Huggins potential with the logarithmic behaviour
and the degenerate mobilities. In other words, the viscoelastic phase separation model that is based on the singular potentials and/or degenerate mobilities better reflects important physical properties.\\
{\cred
 In degenerate situations the main mathematical difficulty arises from losing a-priori control of the chemical potential and possible singularities of the potential. We refer to \cite{agosti_cahn-hilliard-type_2017, Boyer.1999, cances_two-phase_2019, dai_weak_2021, Elliott.1996, Grun.1995, jihui_degenerate_2020,
liu_convective_2008,Passo1998}, where similar situations for the Cahn-Hilliard-type models are studied.}\\

The main goal of this paper is to extend and generalize our recent results on the viscoelastic phase separation model with the regular potential \cite{Brunk.}
by considering a \emph{singular potential and degenerate mobilities}. Our aim is to show \emph{existence of global weak solutions}. We should note that the model of Zhou et al.~\cite{Zhou.2006} has been already successfully used for numerical simulations of spinodal decomposition, see, e.g.~\cite{LukacovaMedvidova.2017}, \cite{Strasser.2018} for both regular and singular potentials, however the mathematical analysis was still open. Thus, the aim of the present paper is to fill this gap and present a rigorous analysis of the viscoelastic phase separation model in the case of a singular potential and degenerate mobilities. \\

The paper is organized in the following way. In Section 2 we present the mathematical model for viscoelastic phase separation. The weak solution to our viscoelastic phase separation model is introduced in Section 3 and the main result on the existence of global weak solutions is formulated in Section 4. Sections 5-8 are devoted to the proof of our main result. In Section 9 we study the situation of two different degenerate mobilities. Section 10 presents results of three-dimensional numerical simulations for the spinodal decomposition problem. {\cred Numerical results are in a good agreement with physical experiments of dynamically asymmetric mixtures, see \cite{Tanaka.}. Specifically, formation of  transient network-like polymeric structures and volume shrinking can been observed as expected.}

\section{Mathematical Model}
The viscoelastic phase separation can be described by a coupled system consisting of the Cahn-Hilliard equation for phase field evolution, the Navier-Stokes equation for fluid flow and the time evolution of the viscoelastic conformation tensor. The total energy of the polymer-solvent mixture consists of the mixing energy between the polymer and the solvent, the bulk stress energy, the elastic energy and the kinetic energy.
\begin{align}
E_{tot}(\phi,q,\CC,\u)&=  E_{mix}(\phi) + E_{bulk}(q) + E_{el}(\CC) + E_{kin}(\u) \label{eq:free_energy}\\
&=\int_\Omega \(\frac{c_0}{2}\snorm*{\na\phi}^2 + F(\phi)\) + \int_\Omega \frac{1}{2}q^2 + \int_\Omega \(\frac{1}{4}\tr{\TT - 2\ln(\CC) - \I}\) + \int_\Omega \frac{1}{2}\snorm*{\u}^2, \nonumber
\end{align}
where $\phi$ denotes the volume fraction of polymer molecules, $q\I$ the bulk stress {\cred arising from (microscopic) intermolecular attractive interactions}, $\CC$ the viscoelastic conformation tensor and $\u$ the volume averaged velocity consisting of the solvent and polymer velocity. {\cred For a detailed physical explanation of the model we refer to \cite{Tanaka., Zhou.2006}, see also \cite{brunk2020analysis, Brunk.} for a GENERIC-type consistent derivation.  We note that $q$ can be seen as a viscoelastic pressure which is related to the divergence of the relative velocity, while $\CC$ is related to the deformation of the divergence-free volume-averaged velocity $\u$.} Furthermore, $c_0$ is a positive constant proportional to the interface width. In the present work we will focus on the Flory-Huggins potential,
\begin{equation}
\label{eq:fhpot}
F(\phi) = \frac{1}{n_p}\phi\ln(\phi) + \frac{1}{n_s}(1-\phi)\ln(1-\phi) + \chi\phi(1-\phi),
\end{equation}
where $n_p,n_s$ stand for the molecular weights of the polymer and solvent, respectively. Here, $\chi$ is the so-called Flory interaction parameter which describes the interaction between the mixture components. In what follows we will study a class of more general potentials that includes (\ref{eq:fhpot}).
A full derivation of the model can be found in \cite{Brunk.}.

\begin{tcolorbox}
	\begin{align}
		\label{eq:full_model}
	\begin{split}
	\frac{\partial \phi}{\partial t} + \hspace{1em}\u\cdot\nabla\phi  &= \dib{m(\phi)\nabla\mu} - \dib{n(\phi)\nabla\big(A(\phi)q\big)} \\
	\frac{\partial q   }{\partial t} + \hspace{1em}\u\cdot\nabla q    &= -\frac{1}{\tau(\phi)}q + A(\phi)\Delta\big(A(\phi)q\big) - A(\phi)\dib{n(\phi)\nabla\mu} + \varepsilon_1\Delta q\\
	\frac{\partial\u   }{\partial t} + (\u\cdot\nabla)\u  &= \frac{1}{2}\dib{\eta(\phi)\big(\nabla\u + (\nabla\u)^\top\big)} -\nabla p + \di{\TT}  + \nabla\phi\mu \\
	\frac{\partial\CC  }{\partial t} + (\u\cdot\nabla)\CC &= (\nabla\u)\CC + \CC(\nabla\u)^\top + \trC\,\mathbf{I} - \trC^2\,\CC + \varepsilon_2\Delta\CC \\
	\div{\u} &= 0 \hspace{1cm}\TT = \tr{\CC}\CC - \I \hspace{1cm}   \mu = -c_0\Delta\phi + f(\phi)
	\end{split}
	\end{align}
\end{tcolorbox}
System (\ref{eq:full_model}) is formulated on $(0,T)\times\Omega$, where $\Omega\subset\mathbb{R}^2$ has a convex Lipschitz-continuous boundary or is at least $C^{1,1}$. It is equipped with the following initial and boundary conditions
\begin{align}
\restr{(\phi,q,\u,\CC)}{t=0} = (\phi_0 , q_0, \u_0, \CC_0),  \restr{\p_n\phi}{\p\Omega}=\restr{\p_n\mu}{\p\Omega}=\restr{\p_nq}{\p\Omega}=0,  \restr{\u}{\p\Omega} = \mathbf{0}, \restr{\p_n\CC}{\p\Omega} = \mathbf{0}. \label{eq:sysbc}
\end{align}

We consider the relation between the conformation tensor and the viscoelastic stress tensor given by
\begin{equation*}
\TT = \tr{\CC}\CC - \I,
\end{equation*}
see, e.g. \cite{LukacovaMedvidova.2015,LukacovaMedvidova.2017d}.
We proceed by formulating some assumptions on system (\ref{eq:full_model}).

{\cred
\begin{Remark}
In most of the rheological literature on viscoelastic fluids
non-diffusive equations with $\varepsilon_1 = 0 = \varepsilon_2$ are considered. A
justification for strictly positive coefficients $\varepsilon_1, \varepsilon_2$ has been given in \cite{Barrett2007}, where it is explained
that these diffusive terms arise due to the center-of-mass diffusion of the polymer chains. Furthermore, the equations for $q$ and $\CC$ are not coupled, due to its inherent different association with the relative and the volume-averaged velocity, respectively. If the higher order deformation tensors associated to the relative velocity are included the coupling between these quantities can be revealed.
\end{Remark}
}

\subsection{Assumptions}
\label{sub:ass}
We assume that the coefficient functions $\tau,h,\eta$ are continuous, positive and bounded, i.e.
\begin{equation}
\label{eq:coeff1}
0 < \tau_1 \leq \tau(x) \leq \tau_2, \quad 0 < h_1\leq h(x) \leq h_2, \quad 0 < \eta_1 \leq \eta(x) \leq \eta_2 \quad \textrm{ for all } x\in \mathbb{R}.
\end{equation}
The functions $A,m,n$ are assumed to be continuous, non-negative and bounded,
\begin{equation}
\label{eq:coeff2}
0 \leq A_1 \leq A(x) \leq A_2,\quad 0 \leq m_1\leq m(x) \leq m_2, \quad 0 \leq n_1\leq n(x) \leq n_2 \quad \textrm{ for all } x\in \mathbb{R}.
\end{equation}
Further $A(x)$ is a $C^1(\mathbb{R})$ function with $\norm*{A^\prime}_{L^\infty} \leq A_2^\prime$.
In order to differentiate between the regular case and the case with degenerate mobility functions $m$ and $n$ we introduce the following assumptions.

\begin{Assumptions}[Regular case] \label{ass:regular}
\hspace{1em} \\
\vspace{-1em}
 \begin{itemize}
  \item We assume  $m(x)=n(x)\in C(\mathbb{R})$ with $0 < m_1\leq m(x)\leq 1,$ for all $ x\in\mathbb{R}.$
  \item We assume $F\in C^2(\mathbb{R})$ with constants $c_i > 0,\; i=1,\ldots,7$ and $c_8\geq 0$ such that:
  \begin{align*}
|F(x)| \leq c_1|x|^{p} + c_2&, \hspace{1em}|F^{\prime}(x)| \leq c_3|x|^{p-1} + c_4, \hspace{1em}|F^{\prime\prime}(x)| \leq c_5|x|^{p-2} + c_6\textnormal{ for } p\geq 2, \\
F(x) \geq -c_7&, \quad F^{\prime\prime}(x) \geq - c_8.
\end{align*}
 \end{itemize}
\end{Assumptions}
A typical example satisfying the above assumptions is the well-known Ginzburg-Landau potential $F(x)=x^2(1-x)^2$.
For the single degenerate mobility case we introduce the following set of assumptions.
\begin{Assumptions}[Single Degenerate case] \label{ass:deg}
\hspace{1em} \\
\vspace{-1em}
 \begin{itemize}
  \item We assume $m\in C^1([0,1])$ with $m(x) = 0$ if and only if $x\in\{0,1\}.$ The mobility function $m(x)$ is continuously extended by zero on $\mathbb{R}\setminus[0,1]$.
  \item Further, we set $n(x)=m(x)$ and $m(x)\leq 1$, $x\in [0,1].$
  \item The potential can be divided into $F=F_1+F_2$ with a convex part $F_1\in C^2([0,1])$ and a concave part $F_2\in C^2([0,1])$. $F_2$ is continuously extended on $\mathbb{R}$ such that $\norm*{F_2^{\prime\prime}}_{L^\infty(\R)}\leq F_0$.
  \item The convex part $F_1$ additionally satisfies $mF_1^{\prime\prime}\in C([0,1])$.
  \item The boundary condition $\restr{m(\phi)\na\mu\cdot\mathbf{n}}{\p\Omega}=0$ holds.
 \end{itemize}
\end{Assumptions}
The typical example satisfying the above assumptions $m(x)=x^m(1-x)^m$ and the Flory-Huggins potential (\ref{eq:fhpot}) for suitable $\chi$ and $m\geq 1$.
The assumptions of the case with different mobilities differ only in the choice of mobility and the function $A$.
\begin{Assumptions}[Double degenerate case] \label{ass:deg2}
	\hspace{1em} \\
	\vspace{-1em}
	\begin{itemize}
\item We assume $n,m\in C^1([0,1])$ with $n(x) = m(x) = 0$ if and only if $x\in\{0,1\}.$ The mobility functions $n(x), m(x)$ are continuously extended by zero on $\mathbb{R}\setminus[0,1]$.
\item We set $n^2(x)=m(x)$ with $m(x)\leq m_2$ for $x \in [0,1]$.
 \item The potential can be divided into $F=F_1+F_2$ with a convex part $F_1\in C^2(0,1)$ and a concave part $F_2\in C^2([0,1])$. $F_2$ is continuously extended on $\mathbb{R}$ such that $\norm*{F_2^{\prime\prime}}_{L^\infty(\R)}\leq F_0$.
\item The convex part $F_1$ additionally satisfies $(mF_1^{\prime\prime})\in C([0,1])$.
\item We assume $\norm*{\frac{A(x)}{n(x)}}_{L^\infty(0,1)}\leq c,\quad \norm*{\frac{A^\prime(x)}{n(x)}}_{L^\infty(0,1)}\leq c.$
  \item The boundary condition $\restr{m(\phi)\na\mu\cdot\mathbf{n}}{\p\Omega}=0$ holds.
	\end{itemize}
\end{Assumptions}

\begin{Lemma}[\cite{Grun.1995}]
	\label{lema:gtime}
	Let $\phi\in L^2(0,T;H^1(\Omega))$, $\frac{\partial\phi}{\partial t}\in L^2(0,T;H^{-1}(\Omega))$ and let $G\in C^2(\mathbb{R},\mathbb{R})$ be convex with linear growth of its derivative $G^\prime$. Then
	\begin{equation*}
	\int_{r_1}^{r_2} \left\langle \frac{\partial \phi}{\partial t}, G^\prime(\phi) \right\rangle \dt = \int_\Omega G(\phi(x,r_2)) \dx - \int_\Omega G(\phi(x,r_1)) \dx
	\end{equation*}
	for arbitrary $r_1,r_2\in[0,T)$.
\end{Lemma}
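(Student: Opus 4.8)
The plan is to establish the identity by a time-regularization argument: mollify $\phi$ in time, invoke the classical chain rule for the resulting functions that are smooth in time, and then pass to the limit. Before doing so I would record the two facts that make both sides meaningful. Since $\phi\in L^2(0,T;H^1(\Omega))$ with $\partial_t\phi\in L^2(0,T;H^{-1}(\Omega))$, the standard (Lions--Magenes) embedding for the Gelfand triple $H^1(\Omega)\hookrightarrow L^2(\Omega)\hookrightarrow H^{-1}(\Omega)$ yields $\phi\in C([0,T];L^2(\Omega))$, so the pointwise-in-time traces $\phi(\cdot,r_1),\phi(\cdot,r_2)\in L^2(\Omega)$ are well defined. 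Convexity of $G$ together with the linear growth of $G'$ forces $G$ to grow at most quadratically, whence $G(\phi(\cdot,r_i))\in L^1(\Omega)$ and the right-hand side is finite; the accompanying control on $G''$ (the Lipschitz character of $G'$) gives $G'(\phi(t))\in H^1(\Omega)$ for a.e.\ $t$, with $\na G'(\phi)=G''(\phi)\na\phi$, so that the duality pairing $\langle\partial_t\phi,G'(\phi)\rangle$ on the left is integrable in $t$.

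Next I would extend $\phi$ to a slightly larger time interval and set $\phi^\sigma(t)=\int\rho_\sigma(t-s)\phi(s)\dd s$ with a standard mollifier $\rho_\sigma$. The structural point is that differentiation moves onto the kernel, $\partial_t\phi^\sigma(t)=\int\rho_\sigma'(t-s)\phi(s)\dd s$, which is $H^1(\Omega)$-valued (in particular $L^2$-valued) and, after integrating by parts in $s$, coincides with the mollification $\int\rho_\sigma(t-s)\,\partial_s\phi(s)\dd s$ of $\partial_t\phi$. Hence $\phi^\sigma$ is $C^1$ in time with values in $H^1(\Omega)$, the map $t\mapsto\int_\Omega G(\phi^\sigma(x,t))\dx$ is absolutely continuous, and the classical chain rule gives
\[
\int_{r_1}^{r_2}\int_\Omega G'(\phi^\sigma)\,\partial_t\phi^\sigma\dx\dt
=\int_\Omega G(\phi^\sigma(x,r_2))\dx-\int_\Omega G(\phi^\sigma(x,r_1))\dx ,
\]
where, because $\partial_t\phi^\sigma(t)\in L^2(\Omega)$, the left integrand is an honest $L^2$ inner product.

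Then I would let $\sigma\to0$. On the right-hand side I would use $\phi^\sigma\to\phi$ in $C([0,T];L^2(\Omega))$ to get $\phi^\sigma(r_i)\to\phi(r_i)$ in $L^2(\Omega)$, pass to an a.e.-convergent subsequence, and deduce $\int_\Omega G(\phi^\sigma(r_i))\dx\to\int_\Omega G(\phi(r_i))\dx$ from the quadratic growth of $G$ together with the uniform integrability of $\snorm{\phi^\sigma(r_i)}^2$ (a Vitali argument). On the left-hand side I would read the pairing as the $H^{-1}$--$H^1$ duality and combine $\partial_t\phi^\sigma\to\partial_t\phi$ in $L^2(0,T;H^{-1}(\Omega))$ with $G'(\phi^\sigma)\to G'(\phi)$ in $L^2(0,T;H^1(\Omega))$; the latter rests on $\phi^\sigma\to\phi$ in $L^2(0,T;H^1(\Omega))$, continuity of the Nemytskii operators induced by $G'$ and $G''$, and the bound on $G''$, via $\na G'(\phi^\sigma)=G''(\phi^\sigma)\na\phi^\sigma$.

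The main obstacle I anticipate is precisely this last convergence of the nonlinear term, namely showing $\na G'(\phi^\sigma)=G''(\phi^\sigma)\na\phi^\sigma\to G''(\phi)\na\phi$ in $L^2(0,T;L^2(\Omega))$. This forces one to exploit the boundedness of $G''$ (the quantitative content behind the linear growth of the monotone $G'$): then $G''(\phi^\sigma)$ is uniformly bounded and converges a.e.\ along a subsequence, while $\na\phi^\sigma\to\na\phi$ strongly in $L^2$, so a dominated-convergence argument handles the product. A parallel subtlety is exactly the uniform integrability required on the right-hand side; both are controlled by the same growth hypotheses, and the fact that these bounds deliver convergence of \emph{both} sides simultaneously is what pins down the equality rather than a mere convexity inequality.
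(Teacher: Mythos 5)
First, a point of order: the paper offers no proof of this lemma at all --- it is quoted, with hypotheses, from \cite{Grun.1995} --- so your proposal can only be judged on its own merits rather than against an in-paper argument. Your overall strategy (extend $\phi$ in time, mollify, $\partial_t\phi^\sigma=\rho_\sigma'\ast\phi=\rho_\sigma\ast\partial_t\phi$, apply the classical chain rule to the $C^1$-in-time approximation, then pass to the limit) is the standard route to such chain-rule identities, and the treatment of the right-hand side (traces in $C([0,T];L^2(\Omega))$, quadratic growth of $G$, Vitali) and the duality splitting of the left-hand side are correctly organized.

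The genuine gap is the assertion, made twice, that boundedness of $G''$ --- equivalently the Lipschitz character of $G'$ --- is ``the quantitative content behind the linear growth of the monotone $G'$''. That implication is false. A convex $C^2$ function can have a bounded, monotone derivative and yet unbounded second derivative: take $G''\geq 0$ to be a sum of smooth bumps, the $k$-th bump centred at $s=k$ with height $k$ and mass at most $k^{-2}$; then $G'$ is nondecreasing and bounded (so certainly of linear growth), while $\sup G''=\infty$. Your proof uses $G''\in L^\infty$ at the two decisive points: (i) to obtain $G'(\phi(t))\in H^1(\Omega)$, i.e.\ to make the $H^{-1}$--$H^1$ pairing on the left-hand side meaningful at all, and (ii) to pass to the limit in $\nabla G'(\phi^\sigma)=G''(\phi^\sigma)\nabla\phi^\sigma$ by dominated convergence. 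Neither step survives without it: in two space dimensions $H^1$-functions need not be bounded, so continuity of $G''$ alone gives no control of $G''(\phi^\sigma)$, and the product $G''(\phi^\sigma)\nabla\phi^\sigma$ can escape $L^2$. In mitigation, some hypothesis of this kind appears necessary even for the statement to be well posed, and in the only place the paper invokes the lemma one has $G_\delta''=1/m_\delta\in L^\infty$ by construction (cf.\ (\ref{eq:appent}) and (\ref{eq:appmob})), so your argument does prove the lemma in the form in which it is actually applied. But as written your proof rests on a false deduction; you should either state $G''\in L^\infty$ (equivalently, $G'$ globally Lipschitz) as an explicit standing assumption, or replace the offending step by a purely convexity-based argument (two-sided subgradient inequalities applied to difference quotients, in the spirit of Alt--Luckhaus), which avoids differentiating the composition $G'(\phi^\sigma)$ but then requires $G'(\phi)\in L^2(0,T;H^1(\Omega))$ to be assumed or proved separately.
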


Now, let us introduce the following notation $H:=\{\vv\in L^2(\Omega): \div{\vv}=0\}$ and $V:=H^1_0(\Omega)\cap H.$	
In order to prove our main result on the global existence of a weak solution for model (\ref{eq:full_model}) with the degenerate mobilities and singular potential function we first recall our existence result for the regular case.
The following theorem can be proven analogously as in \cite{Brunk.}.
\begin{Theorem}
	\label{theo:ex}
	Let Assumption \ref{ass:regular} are fulfilled and the initial data $$\left(\phi_0,q_0,\u_0,\CC_0 \right) \in H^1(\Omega)\times L^2(\Omega) \times H \times L^2(\Omega)^{2\times2}.$$ Then for a given $T>0$ there exists a global weak solution $(\phi,q,\mu,\u,\CC)$ of (\ref{eq:full_model}) such that	
	\begin{align*}
	&\phi \in L^{\infty}(0,T;H^1(\Omega))\cap L^2(0,T;H^2(\Omega)),& & q,\CC\in L^\infty(0,T;L^2(\Omega))\cap L^2(0,T;H^1(\Omega)),\\
	&\u\in L^{\infty}(0,T;L^2(\Omega))\cap L^2(0,T;V),& & \mu,A(\phi)q \in L^2(0,T;H^1(\Omega)),
	\end{align*}
    and
	\begin{align*}
	&\phi^\prime \equiv \frac{\partial \phi}{\partial t} \in L^{2}(0,T;(H^{1}(\Omega))^*),& &
q^\prime \equiv \frac{\partial q}{\partial t},\CC^\prime \equiv \frac{\partial \CC}{\partial t}\in L^{4/3}(0,T;(H^{1}(\Omega))^*),&
&\u^\prime \equiv \frac{\partial u}{\partial t} \in L^2(0,T;V^{*}).
	\end{align*}
Further, for any test function $(\psi,\zeta,\xi,\vv,\DD)\in H^1(\Omega)^3\times V \times H^1(\Omega)^{2\times2}$ and almost every $t\in(0,T)$
it holds
	\begin{align}
	&\int_\Omega\frac{\partial \phi}{\partial t}\psi \dx + \int_\Omega\(\u\cdot\na\phi\)\psi\dx + \int_\Omega m(\phi)\nabla\mu\cdot\nabla\psi  \dx - \int_\Omega m(\phi)\nabla\big(A(\phi)q\big)\cdot\na\psi \dx = 0 \nonumber\\
	&\int_\Omega\frac{\partial q   }{\partial t}\zeta \dx + \int_\Omega\(\u\cdot\na q\)\zeta\dx + \int_\Omega\frac{q\zeta}{\tau(\phi)}\dx + \int_\Omega\nabla\big(A(\phi)q\big)\cdot\na\big(A(\phi)\zeta\big)\dx + \int_\Omega \varepsilon_1\nabla q \cdot\nabla\zeta \dx\nonumber \\
	& \hspace{7em} = \int_\Omega m(\phi)\nabla\mu\cdot\na\big(A(\phi)\zeta\big) \dx \nonumber\\
	&\int_\Omega\mu\xi \dx - c_0\int_\Omega\nabla\phi\cdot\nabla\xi \dx - \int_\Omega F^{\prime}(\phi)\xi \dx = 0 \label{eq:weak_reg}\\
	&\int_\Omega\frac{\partial\u   }{\partial t}\cdot\vv \dx + \int_\Omega(\u\cdot\na)\u\cdot\vv \dx - \int_\Omega\eta(\phi)\nabla\u:\nabla\vv \dx + \int_\Omega\TT:\nabla\vv \dx - \int_\Omega\mu\nabla\phi\cdot\vv \dx = 0 \nonumber\\
	&\int_\Omega\frac{\partial\CC  }{\partial t}:\DD \dx + \int_\Omega(\u\cdot\na)\CC:\DD \dx- \int_\Omega \Big[(\nabla\u)\CC + \CC(\na\u)^T\Big]:\DD \dx + \varepsilon_2\int_\Omega\nabla\CC:\nabla\DD \dx  \nonumber\\
	& \hspace{7em} =\int_\Omega h(\phi)\trC\I:\DD \dx - \int_\Omega h(\phi)\tr{\CC}^2\CC:\DD \dx  \nonumber
	\end{align}
and the initial conditions $\left(\phi(0),q(0),\u(0),\CC(0) \right) = \left(\phi_0,q_0,\u_0,\CC_0 \right)$ are fulfilled.\\
	Moreover, the weak solution satisfies the energy inequality
	\begin{align}
	&\left(\int_\Omega \frac{c_0}{2}\snorm{\nabla\phi(t)}^2 + F(\phi(t)) + \frac{1}{2}|q(t)|^2 +\frac{1}{2}\snorm{\u(t)}^2 + \frac{1}{4}\snorm*{\CC(t)}^2 \dx\right) \label{eq:regularenergy} \\
	&\leq -\int_0^t\int_\Omega \(\snorm*{\sqrt{m(\phi)}\nabla\mu} - \snorm*{\na\big(A(\phi)q\big)}\)^2\dx\dta -\int_0^t\int_\Omega \frac{1}{\tau(\phi)}q^2 \dx\dta -\varepsilon_1\int_0^t\int_\Omega \snorm*{\nabla q}^2 \dx\dta\nonumber \\
	&-\int_0^t\int_\Omega \eta(\phi)\snorm*{\Du}^2\dx\dta - \frac{\varepsilon_2}{2}\int_0^t\int_\Omega\snorm*{\nabla\CC}^2\dx\dta - \frac{1}{2}\int_0^t\int_\Omega h(\phi)\snorm*{\trC\CC}^2\dx\dta \nonumber  \\
	&+ \frac{1}{2}\int_0^t\int_\Omega h(\phi)\snorm*{\trC}^2\dx\dta+\left(\int_\Omega \frac{c_0}{2}\snorm{\nabla\phi_0}^2 + F(\phi_0) + \frac{1}{2}|q_0|^2 +\frac{1}{2}\snorm{\u_0}^2 + \frac{1}{4}\snorm*{\CC_0}^2 \dx\right) \nonumber
	\end{align}
	for almost every $t\in(0,T)$.
\end{Theorem}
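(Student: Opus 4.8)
\textit{Proof sketch.} The strategy is the standard Galerkin-plus-compactness scheme for coupled phase-field--fluid systems, as carried out in \cite{Brunk.}; in the regular case the mobilities are bounded below ($m=n\ge m_1>0$) and $F\in C^2$ has only polynomial growth, so neither the potential nor the mobility needs to be regularized and the whole difficulty sits in the nonlinear coupling.

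\textbf{Step 1 (Galerkin approximation).} I would discretize in space with spectral bases adapted to the boundary conditions in \eqref{eq:sysbc}: the eigenfunctions of the Neumann Laplacian for the scalar unknowns $\phi,q$ and the chemical potential $\mu$ (componentwise for $\CC$), and the Stokes eigenfunctions of $V$ for $\u$. Projecting \eqref{eq:weak_reg} onto the $N$-dimensional subspaces turns the system into an ODE for the coefficients whose right-hand side is continuous (indeed locally Lipschitz, since $A,m,\eta,\tau,h$ are continuous and the remaining nonlinearities $\u\cdot\na$, $\trC\,\CC$, $\tr{\CC}^2\CC$, $\mu\na\phi$ are polynomial), so Carath\'eodory theory produces a unique local solution. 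It is essential to use the \emph{same} basis for $\phi$ and $\mu$, so that the test choices $\psi=\mu$ and $\xi=\frac{\p\phi}{\p t}$ below are admissible at the discrete level.

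\textbf{Step 2 (Energy estimate and global existence).} Testing the discrete equations with their natural multipliers --- the $\phi$-equation with $\mu$, the $\mu$-relation with $\frac{\p\phi}{\p t}$, the $q$-equation with $q$, the $\u$-equation with $\u$, and the $\CC$-equation with $\tfrac12\CC$ --- and summing, all convective terms drop by $\div{\u}=0$ and \eqref{eq:sysbc}; the capillary force $\int_\Omega\mu\na\phi\cdot\u$ cancels the transport term $\int_\Omega(\u\cdot\na\phi)\mu$; the $\phi$--$q$ cross-diffusion terms combine with $\int_\Omega m(\phi)\snorm*{\na\mu}^2$ and $\int_\Omega\snorm*{\na(A(\phi)q)}^2$ into the sign-definite dissipation recorded in \eqref{eq:regularenergy}; and, crucially, the velocity--stress coupling cancels: the elastic stress $\int_\Omega\TT:\na\u=\int_\Omega\trC\,\CC:\na\u$ from the momentum equation is matched by the stretching contribution $\tfrac12\int_\Omega[(\na\u)\CC+\CC(\na\u)^\top]:\CC=\int_\Omega\CC^2:\na\u$ from the $\CC$-equation, the two being equal by the two-dimensional Cayley--Hamilton identity $\CC^2=\trC\,\CC-\det(\CC)\,\I$ together with $\div{\u}=0$ --- precisely where $\Omega\subset\R^2$ enters. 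The only indefinite term, $\tfrac12\int_\Omega h(\phi)\snorm*{\trC}^2$, is absorbed by the coercive $\tfrac14\snorm*{\CC}^2$ via Gr\"onwall, yielding bounds uniform in $N$ and in $t\le T$; these rule out blow-up of the ODE and hence give global discrete solutions. The bounds are exactly those in the statement, with $\sqrt{m(\phi)}\na\mu,\na(A(\phi)q)\in L^2(0,T;L^2(\Omega))$; since $m\ge m_1>0$ this gives $\na\mu\in L^2(L^2)$, hence $\mu\in L^2(H^1)$ after controlling its spatial mean through $F'(\phi)$, and elliptic regularity applied to $c_0\Delta\phi=f(\phi)-\mu$ on the convex/$C^{1,1}$ domain upgrades $\phi$ to $L^2(0,T;H^2(\Omega))$.

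\textbf{Step 3 (Time derivatives and compactness).} Using the above bounds together with the two-dimensional embedding $L^\infty(L^2)\cap L^2(H^1)\hookrightarrow L^4(L^4)$ (Ladyzhenskaya) to control the quadratic and cubic terms, I would bound the time derivatives in dual norms, obtaining $\phi'\in L^2(H^{-1})$, $\u'\in L^2(V^*)$ and $q',\CC'\in L^{4/3}(H^{-1})$; the exponent $4/3$ is forced by the cubic term $\tr{\CC}^2\CC$. The Aubin--Lions--Simon lemma then yields, along a subsequence, strong convergence of $\phi$ in $L^2(H^1)$ (so $\na\phi$ converges in $L^2(L^2)$), of $\u$ in $L^2(H)$, and of $q$ and $\CC$ in $L^2(L^2)$, on top of the weak/weak-$*$ limits coming from the uniform bounds.

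\textbf{Step 4 (Limit passage and conclusion).} With these convergences I would pass to the limit termwise in \eqref{eq:weak_reg}: the continuous coefficients $A(\phi_N),m(\phi_N),\eta(\phi_N),h(\phi_N)$ converge a.e.\ and boundedly by the strong convergence of $\phi_N$, which together with the weak convergence of $\na\mu_N,\na(A(\phi_N)q_N),\na\u_N,\CC_N$ handles the terms linear in the highest derivatives, while the strong $L^2(L^2)$-- and $L^4(L^4)$--convergences dispatch the nonlinear products $\u\otimes\u$, $\mu\na\phi$ and $\trC\,\CC$. Finally \eqref{eq:regularenergy} follows by taking the limit in its discrete version using weak lower semicontinuity of the $L^2$-- and dissipation norms on the right, and the initial data are attained because $\phi,\u$ belong to $C([0,T];\text{weak})$.

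\textbf{Main obstacle.} I expect the two genuine difficulties to be, first, the sign structure of the energy estimate in Step 2 --- in particular the Cayley--Hamilton cancellation of the velocity--stress coupling and the recombination of the $\phi$--$q$ cross-diffusion into a dissipative functional --- and, second, the limit passage in the cubic reaction term $h(\phi)\tr{\CC}^2\CC$ in Step 4. The latter is controlled only in $L^{4/3}(L^{4/3})$, so a plain weak--strong product argument is insufficient; instead one combines the a.e.\ convergence of $h(\phi_N)\snorm*{\CC_N}^2\CC_N$ with its uniform $L^{4/3}$ bound and a Vitali/de~la~Vall\'ee-Poussin uniform-integrability argument to identify the weak limit.
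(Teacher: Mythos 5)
Your overall architecture (Galerkin scheme, energy estimate, dual bounds on time derivatives, Aubin--Lions compactness, termwise limit passage) is the same as the one the paper relies on, and your Cayley--Hamilton observation is correct: in two dimensions, for symmetric $\CC$ and $\div{\u}=0$, one has pointwise $\na\u:\CC^2=\trC\,\CC:\na\u$, which is exactly why testing the $\CC$-equation with $\tfrac12\CC$ cancels $\int_\Omega\TT:\na\u$. However, Step 2 contains a genuine gap, and it sits precisely at the point this theorem is delicate. Testing the $\phi$-equation with $\mu$ and the $q$-equation with $q$ does \emph{not} produce the two separate dissipation integrals $\int_\Omega m(\phi)\snorm*{\na\mu}^2$ and $\int_\Omega\snorm*{\na\big(A(\phi)q\big)}^2$: the cross-diffusion terms recombine into the quadratic form
\begin{equation*}
-\int_\Omega \Big( m(\phi)\snorm*{\na\mu}^2 - 2m(\phi)\,\na\mu\cdot\na\big(A(\phi)q\big) + \snorm*{\na\big(A(\phi)q\big)}^2 \Big),
\end{equation*}
whose pointwise matrix $\begin{pmatrix} m & -m\\ -m & 1\end{pmatrix}$ has determinant $m(1-m)$ and is therefore only positive \emph{semi}-definite under the assumption $0<m_1\le m\le 1$ (for $m\equiv 1$ the dissipation is exactly $-\int_\Omega\snorm*{\na\mu-\na(A(\phi)q)}^2$). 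This is why the energy inequality (\ref{eq:regularenergy}) controls only the difference $\snorm*{\sqrt{m(\phi)}\na\mu}-\snorm*{\na\big(A(\phi)q\big)}$ in $\LL$. Your chain ``$\sqrt{m}\na\mu,\na(A q)\in L^2(L^2)$, hence $\na\mu\in L^2(L^2)$, hence $\mu\in L^2(H^1)$, hence $\phi\in L^2(H^2)$ by elliptic regularity'' therefore does not follow from the energy estimate: neither flux is individually controlled by it.

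The paper closes this gap with an additional, independent estimate, and its logical order is the reverse of yours: $\phi\in L^2(0,T;H^2(\Omega))$ comes \emph{first}, and $\na\mu$ last. Concretely, one introduces the entropy $G$ with $G''=1/m$, cf.\ (\ref{eq:defentropy}) (this is where $m\ge m_1>0$ is used, so that $G$ is well defined and convex), and tests the $\phi$-equation with $G'(\phi)$ using Lemma \ref{lema:gtime}. Since $m(\phi)G''(\phi)=1$, the degenerate quadratic form disappears and one is left with $\int_\Omega\na\mu\cdot\na\phi = c_0\norm*{\Delta\phi}_{L^2}^2+\int_\Omega F''(\phi)\snorm*{\na\phi}^2$ plus the cross term $\int_\Omega\na\big(A(\phi)q\big)\cdot\na\phi$, which is absorbed via Young and Ladyzhenskaya-type interpolation as in (\ref{eq:Acontrol}). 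The Gronwall lemma, fed by the bounds already available from the energy inequality, gives $\norm*{\Delta\phi}_{L^2(L^2)}\le c$, cf.\ (\ref{eq:regularlaplaceregularity})--(\ref{eq:gron3}); then the computation (\ref{eq:Al2}), which needs exactly this $H^2$ bound, gives $\na\big(A(\phi)q\big)\in\LL$ separately; and only now does the difference bound from (\ref{eq:regularenergy}) yield $\sqrt{m(\phi)}\na\mu\in\LL$, hence $\na\mu\in\LL$ since $m\ge m_1$. Without this entropy argument (or an equivalent replacement), your Galerkin approximations cannot be placed in the spaces claimed by the theorem, and you cannot identify $\mu$ in the third equation of (\ref{eq:weak_reg}); the rest of your Steps 3--4 would then have nothing to run on.
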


\begin{proof}[Sketch of the proof]
	We will shortly outline the main ideas to prove the above result. Applying the energy method we obtain (\ref{eq:regularenergy}) and consequently
	\begin{align}
	\u \in L^\infty(0,T;L^2(\Omega))\cap L^2(0,T;V) \quad q,\CC \in L^\infty(0,T;L^2(\Omega))\cap L^2(0,T;H^1(\Omega)) \nonumber \\
	\phi \in L^\infty(0,T;H^1(\Omega)), \qquad \snorm*{\sqrt{m(\phi)}\na\mu} - \snorm*{\na\big(A(\phi)q\big)} \in L^2(0,T;L^2(\Omega)). \label{eq:gron1}
	\end{align}
	The key step is now to obtain $m(\phi)$ independent estimates. To this end we construct the so called entropy function \cite{Boyer.1999,Elliott.1996} $G$ via
	\begin{equation}
	G(1/2) = 0, \quad G^\prime(1/2) = 0 \quad G^{\prime\prime}(y) = \frac{1}{m(y)}, \text{ for all } y\in\mathbb{R}. \label{eq:defentropy}
	\end{equation}
	Since $m(y)>0$ for all $y\in\mathbb{R}$, the function $G(y)$ is convex and non-negative by construction.
	Using $G^\prime(\phi)$ as a test function in $(\ref{eq:weak_reg})_1$ and applying Lemma \ref{lema:gtime} yields
	\begin{align*}
	\td \int_\Omega G(\phi) \dx + \int_\Omega \(\u\cdot\na\phi\) G^\prime(\phi) \dx = -\int_\Omega m(\phi)\na\mu\cdot \na G^\prime(\phi) \dx + \int_\Omega n(\phi)\na\big(A(\phi)q\big)\cdot\na G^\prime(\phi) \dx.
	\end{align*}
	Recalling that $\div{\u}=0$ and $G^{\prime\prime}(\phi)m(\phi)=1$ by (\ref{eq:defentropy}) we deduce
	\begin{align*}
	\td \int_\Omega G(\phi) \dx = -\int_\Omega \na\mu\cdot \na\phi \dx + \int_\Omega \frac{n(\phi)}{m(\phi)}\cdot\na\big(A(\phi)q\big)\na\phi \dx.
	\end{align*}
	Being in the regular case, i.e. $n(\phi)/m(\phi)=1$ we can estimate the last integral as follows
	\begin{align}
	\label{eq:Acontrol}
	\int_\Omega \na\big(A(\phi)q\big)\na(\phi) \dx \leq c(c_0,A,A^\prime,\sigma)(\norm*{q}_{L^2}^2\norm*{\na\phi}_{L^2}^2 + \norm*{\na\phi}_{L^2}^2\norm*{\na q}_{L^2}^2) +  c_0\sigma\norm*{\Delta\phi}_{L^2}^2,
	\end{align}
	where $\sigma < 1$ and $c\approx \sigma^{-1}$ are constants from the Young inequality.
	Here we have used the notation $\norm*{\cdot}_{L^2}$ for the norm in $L^2(\Omega)$.
	Integration by parts and inserting the definition of $\mu$, cf. $(\ref{eq:weak_reg})_3$, yields
	\begin{align}
	\td \int_\Omega G(\phi) \dx + (1-\sigma)c_0\int_\Omega |\Delta\phi|^2 \dx + \int_\Omega F^{\prime\prime}(\phi)|\na\phi|^2\dx \leq c\norm*{\na\phi}_{L^2}^2(\norm*{q}_{L^2}^2 + \norm*{\na q}_{L^2}^2). \label{eq:regularlaplaceregularity}
	\end{align}
	Due to (\ref{eq:gron1}) we know that the right hand side of (\ref{eq:regularlaplaceregularity}) is bounded in $L^1(0,T)$ and we can apply the Gronwall inequality to find the a priori bounds
	\begin{align}
	\norm*{\int _\Omega G(\phi) \dx}_{L^\infty(0,T)} \leq c, \quad\quad \norm*{\Delta\phi}_{L^2(L^2)}\leq c, \label{eq:gron3}
	\end{align}
	where $\norm*{\cdot}_{L^p(L^q)}$ is the norm in the space $L^p(0,T;L^q(\Omega)), 1\leq p,q \leq \infty.$
	Next we calculate the $L^2$-norm of $\na\big(A(\phi)q\big)$ as follows
	\begin{align}
	\int_0^T \int_\Omega \snorm*{\na\big(A(\phi)q\big)}^2 \dx \dt &\leq c\int_0^T \int_\Omega \snorm*{A^\prime(\phi)\na\phi q}^2 + \snorm*{A\na q}^2 \dx\dt \nonumber\\
	&\leq cA_2^2\norm*{\na q}_{L^2(L^2)}^2 + c\norm*{A^\prime}^2_{L^\infty}\int_0^T \norm*{\na\phi}_{L^4}^2\norm*{q}^2_{L^4}\dt \nonumber\\
	&\leq cA_2^2\norm*{\na q}_{L^2(L^2)}^2 + c\norm*{A^\prime}^2_{L^\infty}\int_0^T \norm*{\na\phi}_{L^2}^2\norm*{q}^4_{L^4} + \norm*{\Delta\phi}_{L^2}^2 \dt \nonumber\\
	&\leq c\(\norm*{\na q}_{L^2(L^2)}^2 + \norm*{\Delta\phi}_{L^2(L^2)}^2 + \norm*{\na\phi}^2_{L^\infty(L^2)}\norm*{q}_{L^4(L^4)}^4 \)\leq c. \label{eq:Al2}
	\end{align}
	
	Combining estimate (\ref{eq:gron1}) with (\ref{eq:gron3}) we recover the bound on $\sqrt{m(\phi)}\nabla\mu$ in $L^2(0,T;L^2(\Omega))$, which implies $\nabla\mu \in L^2(0,T;L^2(\Omega))$. With this information one can follow \cite{Brunk.} to obtain the existence result.
	
\end{proof}

\section{Weak solution}
In this section we will introduce the notion of a weak solution in the case of degenerate mobilities and formulate the corresponding energy inequality.

\begin{Definition}
	\label{defn:weak_sol_deg_full}	
	Let the initial data $$(\phi_0,q_0,\u_0,\CC_0)\in H^1(\Omega)\times L^2(\Omega)\times H\times L^2(\Omega)^{2\times2}.$$ Then for every $T>0$ the quadruple $(\phi,q,\J,\u,\CC)$ is called a weak solution of (\ref{eq:full_model}) if it satisfies
	\begin{align*}	
	&\phi \in L^{\infty}(0,T;H^1(\Omega))\cap L^2(0,T;H^2(\Omega)),& &q,\CC\in L^\infty(0,T;L^2(\Omega))\cap L^2(0,T;H^1(\Omega)), \\
	&\u\in L^{\infty}(0,T;H)\cap L^2(0,T;V),& &\J\in L^2(0,T;L^2(\Omega))
	\end{align*}
	with time derivatives
	\begin{align*}
	&\phi^\prime \equiv \frac{\partial \phi}{\partial t} \in L^{2}(0,T;(H^{1}(\Omega))^*),& &
q^\prime\equiv  \frac{\partial q}{\partial t},\CC^\prime\equiv \frac{\partial \CC}{\partial t}\in L^{4/3}(0,T;(H^{1}(\Omega))^*),&
&\u^\prime \equiv \frac{\partial u}{\partial t}\in L^2(0,T;V^{*}).
	\end{align*}
Further, for any test function $(\psi,\zeta,\bxi,\vv,\DD)\in H^1(\Omega)^2\times H^1(\Omega)\cap L^\infty(\Omega)\times V\times H^1(\Omega)^{2\times2}$ and almost every $t\in(0,T)$ it holds
	\begin{align}
	&\int_\Omega\frac{\partial \phi}{\partial t}\psi \dx + \int_\Omega\(\u\cdot\nabla\phi\)\psi \dx + \int_\Omega \J\cdot\nabla\psi \dx - \int_\Omega m(\phi)\nabla\big(A(\phi)q\big)\cdot\na\psi \dx =0 \nonumber\\
	&\int_\Omega\frac{\partial q}{\partial t}\zeta \dx + \int_\Omega\(\u\cdot\nabla q\)\zeta \dx + \int_\Omega\frac{q\zeta}{\tau(\phi)}\dx +  \int_\Omega\nabla\big(A(\phi)q\big)\cdot\na\big(A(\phi)\zeta\big)\dx + \int_\Omega \varepsilon_1\nabla q\cdot\nabla\zeta\dx    \nonumber\\
	&\hspace{7em} = \int_\Omega \J\cdot\nabla\big(A(\phi)\zeta\big) \dx \nonumber\\
	&\int_\Omega \J\cdot\bxi \dx = c_0\int_\Omega \Delta\phi\div{m(\phi)\bxi}\dx + \int_\Omega m(\phi)F^{\prime\prime}(\phi)\nabla\phi\cdot\bxi \dx\label{eq:weak_sol_sys} \\
	&\int_\Omega\frac{\partial\u}{\partial t}\cdot\vv \dx + \int (\u\cdot\na)\u\cdot\vv \dx + \int_\Omega\eta(\phi)\Du:\mathrm{D}\vv \dx + \int_\Omega\TT:\nabla\vv \dx + \int_\Omega c_0\Delta\phi\nabla\phi\cdot\vv \dx = 0 \nonumber\\
	&\int_\Omega\frac{\partial\CC}{\partial t}:\DD \dx + \int (\u\cdot\na)\CC:\DD \dx - \int_\Omega\Big[(\nabla\u)\CC + \CC(\na\u)^T\Big]:\DD \dx + \varepsilon_2\int_\Omega\nabla\CC:\nabla\DD \dx & \nonumber\\
	&\hspace{7em}=  - \int_\Omega h(\phi)\tr{\CC}^2\CC:\DD \dx + \int_\Omega h(\phi)\tr{\CC}\I:\DD \dx \nonumber&
	\end{align}
and the initial data, i.e. $(\phi(0),q(0),\u(0),\CC(0))=(\phi_0,q_0,\u_0,\CC_0)$ are attained.
\end{Definition}
The above formulation for $\J$ is a weak version of $\J = m(\phi)\big(-c_0\nabla\Delta\phi +F^{\prime\prime}(\phi)\nabla\phi\big)$ thus for a smooth solution we can identify $\J=m(\phi)\nabla\mu$. \\

The system (\ref{eq:full_model}) has a Lyapunov functional which is independent of the positive/negative definiteness of the conformation tensor $\CC$, see, e.g.~\cite{Brunk., Mizerova.2015} for the free energy.
Since this functional will accompany the weak solution we will eliminate the appearance of $\mu$ from the equations. To this end we introduce $\Jh$ such that $\J=\sqrt{m(\phi)}\Jh$ which can be seen as the Cahn-Hilliard flux.

\begin{Lemma}
	Each classical solution of (\ref{eq:full_model}) satisfies the following energy inequality
	\begin{align}
	&\td\left(\int_\Omega \frac{c_0}{2}\snorm{\nabla\phi}^2 + F(\phi) + \frac{1}{2}|q|^2 +\frac{1}{2}\snorm{\u}^2 + \frac{1}{4}\snorm*{\CC}^2 \dx\right) \nonumber \\
	&= - \int_\Omega \(\snorm*{\Jh} - \snorm*{\nabla\big(A(\phi)q\big)}\)^2\dx -\int_\Omega \frac{1}{\tau(\phi)}q^2\dx - \varepsilon_1\int_\Omega \snorm*{\nabla q}^2\dx- \int_\Omega \eta(\phi)\snorm*{\Du}^2\dx \nonumber \\
	& \hspace{0.5cm} - \frac{\varepsilon_2}{2}\int_\Omega\snorm*{\nabla\CC}^2\dx - \frac{1}{2} \int_\Omega h(\phi)\snorm*{\tr{\CC}\CC}^2\dx + \frac{1}{2}\int_\Omega h(\phi)\tr{\CC}^2 \dx  \label{eq:energy_full}
	\end{align}
	The integrated version reads
	\begin{align}
	&\left(\int_\Omega \frac{c_0}{2}\snorm{\nabla\phi(t)}^2 + F(\phi(t)) + \frac{1}{2}|q(t)|^2 +\frac{1}{2}\snorm{\u(t)}^2 + \frac{1}{4}\snorm*{\CC(t)}^2 \dx\right) \nonumber \\
	&\leq -\int_0^t\int_\Omega \(\snorm*{\Jh} - \snorm*{\na\big(A(\phi)q\big)}\)^2\dx\dta -\int_0^t\int_\Omega \frac{1}{\tau(\phi)}q^2 \dx\dta -\varepsilon_1\int_0^t\int_\Omega \snorm*{\nabla q}^2 \dx\dta \nonumber \\
	&-\int_0^t\int_\Omega \eta(\phi)\snorm*{\Du}^2\dx\dta - \frac{\varepsilon_2}{2}\int_0^t\int_\Omega\snorm*{\nabla\CC}^2\dx\dta - \frac{1}{2}\int_0^t\int_\Omega h(\phi)\snorm*{\trC\CC}^2\dx\dta \label{eq:energy_full_int}  \\
	&+ \frac{1}{2}\int_0^t\int_\Omega h(\phi)\snorm*{\trC}^2\dx\dta + \left(\int_\Omega \frac{c_0}{2}\snorm{\nabla\phi_0}^2 + F(\phi_0) + \frac{1}{2}|q_0|^2 +\frac{1}{2}\snorm{\u_0}^2 + \frac{1}{4}\snorm*{\CC_0}^2 \dx\right). \nonumber
	\end{align}
\end{Lemma}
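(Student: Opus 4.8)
The plan is to establish the balance (\ref{eq:energy_full}) by the standard energy method: test each evolution equation in (\ref{eq:full_model}) with the multiplier dual to the corresponding block of the energy, and then add the resulting identities so that the coupling terms cancel in pairs. Concretely, I would multiply the $\phi$-equation by $\mu$, the $q$-equation by $q$, take the $L^2$-inner product of the momentum equation with $\u$, and take the Frobenius inner product of the $\CC$-equation with $\demi\CC$, integrating each over $\Omega$. Since the Lemma concerns a classical solution, every integration by parts is legitimate and the time derivatives may be pulled out directly; inserting $\mu=-c_0\Delta\phi+f(\phi)$ with $f=F^\prime$ and using $\restr{\p_n\phi}{\p\Omega}=0$ gives $\int_\Omega\p_t\phi\,\mu\dx=\td\int_\Omega\big(\tfrac{c_0}{2}\snorm*{\na\phi}^2+F(\phi)\big)\dx$, and likewise $\int_\Omega\p_t\CC:\demi\CC\dx=\td\int_\Omega\tfrac14\snorm*{\CC}^2\dx$.

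Next I would dispatch the routine cancellations. The three convective contributions $\int_\Omega(\u\cdot\na q)q$, $\int_\Omega(\u\cdot\na)\CC:\demi\CC$ and $\int_\Omega(\u\cdot\na)\u\cdot\u$ all vanish after one integration by parts because $\div\u=0$ and $\restr{\u}{\p\Omega}=\mathbf{0}$. The transport/Korteweg coupling cancels in the same way: the $\phi$-test produces $\int_\Omega(\u\cdot\na\phi)\mu\dx$ while the momentum test produces $\int_\Omega\mu\na\phi\cdot\u\dx$, and these are identical with opposite signs in the summed balance. The dissipative terms are handled by integration by parts with the boundary data (\ref{eq:sysbc}) and reproduce exactly $-\int_\Omega\eta(\phi)\snorm*{\Du}^2$, $-\varepsilon_1\int_\Omega\snorm*{\na q}^2$, $-\tfrac{\varepsilon_2}{2}\int_\Omega\snorm*{\na\CC}^2$, while the two algebraic terms $-h(\phi)\trC^2\CC$ and $h(\phi)\trC\,\I$ tested against $\demi\CC$ give precisely $-\demi\int_\Omega h(\phi)\snorm*{\trC\CC}^2$ and $+\demi\int_\Omega h(\phi)\snorm*{\trC}^2$.

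The step I expect to be the crux is the cancellation of the elastic stress coupling, which is the only place where the planar geometry $\Omega\subset\R^2$ is genuinely used. The momentum test contributes $-\int_\Omega\TT:\na\u=-\int_\Omega\trC\,(\CC:\na\u)$, where the identity part drops since $\I:\na\u=\div\u=0$; the stretching term in the $\CC$-equation, tested against $\demi\CC$ and using that the flow propagates the symmetry of $\CC$, equals $\int_\Omega\CC^2:\na\u$. These do not match term by term and coincide only after invoking the two-dimensional Cayley--Hamilton identity
\[
\CC^2=\trC\,\CC-\det(\CC)\,\I,
\]
from which $\CC^2:\na\u=\trC\,(\CC:\na\u)-\det(\CC)\,\div\u=\trC\,(\CC:\na\u)$, again by incompressibility. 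Hence the two contributions cancel exactly and no elastic work term survives. The symmetry propagation for $\CC$ and the $2\times2$ Cayley--Hamilton relation are the two facts I would verify carefully here.

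Finally I would treat the Cahn--Hilliard/bulk-stress coupling. The $\phi$- and $q$-tests each generate one copy of $\int_\Omega n(\phi)\na\mu\cdot\na(A(\phi)q)$, so their sum is $2\int_\Omega n(\phi)\na\mu\cdot\na(A(\phi)q)$, while the diffusive terms give $-\int_\Omega m(\phi)\snorm*{\na\mu}^2$ and $-\int_\Omega\snorm*{\na(A(\phi)q)}^2$. Writing $\Jh:=\sqrt{m(\phi)}\,\na\mu$ for the Cahn--Hilliard flux and using the constitutive relation between $m$ and $n$ (namely $n=m$ in the single-degenerate case and $n^2=m$ in the double-degenerate case) together with $m\le1$, the mixed term becomes $2\int_\Omega\beta\,\Jh\cdot\na(A(\phi)q)$ with $0\le\beta=n/\sqrt{m}\le1$, so that $\beta\,\Jh\cdot\na(A(\phi)q)\le\snorm*{\Jh}\snorm*{\na(A(\phi)q)}$ pointwise by Cauchy--Schwarz. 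Combining the three contributions completes the square and bounds them above by $-\int_\Omega\big(\snorm*{\Jh}-\snorm*{\na(A(\phi)q)}\big)^2$; this is precisely where the differential balance (\ref{eq:energy_full}) must be read as an upper bound rather than a strict equality. Collecting all terms yields (\ref{eq:energy_full}), and integrating in time from $0$ to $t$, with the perfect square and all remaining dissipative terms being non-negative, produces the integrated energy inequality (\ref{eq:energy_full_int}).
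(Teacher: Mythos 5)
Your proposal is correct and follows essentially the same route the paper relies on: the lemma is stated there without proof as the formal energy-method computation behind (\ref{eq:regularenergy}), i.e.\ testing the four evolution equations with $(\mu,q,\u,\tfrac12\CC)$, cancelling the convective, Korteweg and elastic couplings (the last exactly as you do, via symmetry of $\CC$ and the two-dimensional Cayley--Hamilton identity $\CC^2=\tr{\CC}\CC-\det(\CC)\I$, which is the standard device in the Peterlin literature), and completing the square in the cross-diffusive block using $n=m\leq 1$ in the single-degenerate case or $n^2=m$ in the double-degenerate case. Your remark that the pointwise Cauchy--Schwarz step makes the differential balance (\ref{eq:energy_full}) an upper bound rather than a genuine equality is also accurate and consistent with the paper, which only ever uses the integrated inequality (\ref{eq:energy_full_int}) with $\leq$.
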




\section{Main Results}
In this section we will formulate the main result on the existence of a global weak solution to the viscoelastic phase separation system (\ref{eq:full_model}) in the degenerate case.
\begin{tcolorbox}
\begin{Theorem}\label{theo:deg1}
	Let Assumptions \ref{ass:deg}  or \ref{ass:deg2} hold. Further let $\phi_0:\Omega\to [0,1]$ and $\phi_0\in H^1(\Omega)$. The potential function $F$ and the entropy function $G$, cf. (\ref{eq:defentropy}), fulfill
	\begin{equation}
	\int_\Omega \Big( F(\phi_0) + G(\phi_0) \dx \Big) < +\infty. \label{eq:startres}
	\end{equation}
	Then for any given $T < \infty$ there exists a global weak solution $(\phi,q,\J,\u,\CC)$ of the viscoelastic phase separation model (\ref{eq:full_model}) in the sense of Definition \ref{defn:weak_sol_deg_full}. Moreover,
	\begin{itemize}
	\item the integrated energy inequality (\ref{eq:energy_full_int}) or (\ref{eq:energy_full_intint}) holds, respectively.
	\item $\phi(x,t) \in [0,1] \text{  for a.e. } (x,t)\in \Omega\times(0,T). $
	\end{itemize}
	If the mobility function satisfies $m^\prime(0) = m^\prime(1) = 0$, then for a.e. $t\in (0,T)$ the set
	\begin{equation*}
	\{(x,t)\in\Omega\times (0,T) \mid \phi(x,t) = 0 \text{ or } \phi(x,t) = 1\}
	\end{equation*}
	has zero measure.
\end{Theorem}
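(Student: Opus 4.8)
The plan is to realize the degenerate solution as the limit of regular solutions produced by Theorem~\ref{theo:ex}. First I would regularize the two sources of degeneracy: replace the mobility $m$ by a positive, bounded $m_\varepsilon$ with $m_\varepsilon\geq\delta(\varepsilon)>0$ that converges to $m$ on $[0,1]$, and replace the singular potential $F$ by a $C^2$ potential $F_\varepsilon$ of polynomial growth agreeing with $F$ on $[\varepsilon,1-\varepsilon]$, so that Assumption~\ref{ass:regular} is met. Theorem~\ref{theo:ex} then yields approximate solutions $(\phi_\varepsilon,q_\varepsilon,\mu_\varepsilon,\u_\varepsilon,\CC_\varepsilon)$ satisfying the energy inequality~(\ref{eq:regularenergy}). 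Absorbing the term $\tfrac12\int_\Omega h(\phi_\varepsilon)|\tr{\CC_\varepsilon}|^2$ on the right-hand side by the elastic energy via Gronwall gives the $\varepsilon$-uniform bounds $\u_\varepsilon\in L^\infty(L^2)\cap L^2(V)$, $q_\varepsilon,\CC_\varepsilon\in L^\infty(L^2)\cap L^2(H^1)$, $\phi_\varepsilon\in L^\infty(H^1)$, and $\sqrt{m_\varepsilon(\phi_\varepsilon)}\nabla\mu_\varepsilon-\nabla\big(A(\phi_\varepsilon)q_\varepsilon\big)\in L^2(L^2)$.

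The decisive step is to obtain $m_\varepsilon$-independent estimates through the entropy function $G_\varepsilon$ with $G_\varepsilon''=1/m_\varepsilon$, cf.~(\ref{eq:defentropy}). Testing the phase equation with $G_\varepsilon'(\phi_\varepsilon)$, using $\div{\u_\varepsilon}=0$ and $G_\varepsilon''m_\varepsilon=1$, reproduces the differential inequality~(\ref{eq:regularlaplaceregularity}); since the initial entropy is controlled by~(\ref{eq:startres}) and $G_\varepsilon\leq G$ on $[0,1]$, Gronwall yields the uniform bounds $\int_\Omega G_\varepsilon(\phi_\varepsilon)\in L^\infty(0,T)$ and $\Delta\phi_\varepsilon\in L^2(L^2)$, hence $\phi_\varepsilon\in L^2(H^2)$ and $\nabla\big(A(\phi_\varepsilon)q_\varepsilon\big)\in L^2(L^2)$ by~(\ref{eq:Al2}). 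Under Assumption~\ref{ass:deg} this is immediate because $n/m=1$; under Assumption~\ref{ass:deg2} the factor $n(\phi)/m(\phi)=1/n(\phi)$ appearing in the cross-diffusion term is exactly what the hypotheses $\norm*{A/n}_{L^\infty},\norm*{A^\prime/n}_{L^\infty}\leq c$ are designed to tame. Writing $\J_\varepsilon:=m_\varepsilon(\phi_\varepsilon)\nabla\mu_\varepsilon=\sqrt{m_\varepsilon(\phi_\varepsilon)}\,\hat\J_\varepsilon$ with $\hat\J_\varepsilon:=\sqrt{m_\varepsilon(\phi_\varepsilon)}\nabla\mu_\varepsilon$, the two previous bounds give $\hat\J_\varepsilon\in L^2(L^2)$ and therefore $\J_\varepsilon\in L^2(L^2)$, all uniformly in $\varepsilon$.

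Next I would bound the time derivatives $\partial_t\phi_\varepsilon$, $\partial_tq_\varepsilon$, $\partial_t\CC_\varepsilon$, $\partial_t\u_\varepsilon$ in the negative spaces dictated by Definition~\ref{defn:weak_sol_deg_full} and apply the Aubin--Lions--Simon lemma to extract a subsequence with $\phi_\varepsilon\to\phi$ strongly in $L^2(H^1)$ and a.e., $\phi_\varepsilon\rightharpoonup\phi$ in $L^2(H^2)$, strong convergence of $q_\varepsilon,\CC_\varepsilon,\u_\varepsilon$ in suitable topologies, and $\J_\varepsilon\rightharpoonup\J$ in $L^2(L^2)$. The pointwise convergence of $\phi_\varepsilon$ together with continuity of the coefficient functions lets me pass to the limit in the convective, elastic-stress and cross-diffusion terms of~(\ref{eq:weak_sol_sys}). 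The constraint $\phi\in[0,1]$ follows from the uniform entropy bound: since the penalty $G_\varepsilon$ carries for arguments outside $[0,1]$ a coefficient that blows up as $\varepsilon\to0$, the bound $\int_\Omega G_\varepsilon(\phi_\varepsilon)\leq c$ together with Fatou's lemma forces $\phi\in[0,1]$ a.e. The energy inequality~(\ref{eq:energy_full_int}) is then recovered by weak lower semicontinuity of the convex dissipation functionals.

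The step I expect to be the main obstacle is the correct identification of the limiting flux $\J$. Because only $\sqrt{m_\varepsilon}\nabla\mu_\varepsilon$ is controlled, $\nabla\mu_\varepsilon$ may itself blow up where $m_\varepsilon\to0$, so one cannot pass to the limit in $m_\varepsilon\nabla\mu_\varepsilon$ naively. The remedy is to read $\J$ through the weak flux identity in~(\ref{eq:weak_sol_sys}): writing $m_\varepsilon\nabla\mu_\varepsilon=-c_0m_\varepsilon\nabla\Delta\phi_\varepsilon+m_\varepsilon F_\varepsilon''(\phi_\varepsilon)\nabla\phi_\varepsilon$, testing against $\bxi$ and integrating by parts transfers the third derivative onto $\div{m(\phi)\bxi}$, after which the $L^2(H^2)$ bound on $\phi_\varepsilon$ and the crucial regularity $mF_1''\in C([0,1])$ (guaranteed by Assumptions~\ref{ass:deg}/\ref{ass:deg2}) keep every term stable in the limit, yielding $\int_\Omega\J\cdot\bxi=c_0\int_\Omega\Delta\phi\,\div{m(\phi)\bxi}+\int_\Omega m(\phi)F''(\phi)\nabla\phi\cdot\bxi$. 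Finally, if $m'(0)=m'(1)=0$, then $m$ vanishes to second order at the endpoints, $1/m$ is non-integrable there, and hence $G(\phi)\to+\infty$ as $\phi\to0,1$; the finiteness of $\int_\Omega G(\phi(t))$ inherited in the limit for a.e.\ $t$ then forces the set $\{\phi=0\text{ or }\phi=1\}$ to have zero measure.
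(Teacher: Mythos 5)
Your proposal is correct and is essentially the paper's own proof (Sections 5--9): regularize $m$ and $F$ by truncation, invoke Theorem~\ref{theo:ex} for the regular problem, derive regularization-independent energy and entropy estimates via $G''=1/m$, extract compactness and weak limits, identify the limiting flux $\J$ through the weak identity tested against $\div{m(\phi)\bxi}$ using the $L^2(H^2)$ bound and $mF_1''\in C([0,1])$, obtain $\phi\in[0,1]$ from the entropy bound, recover the energy inequality by weak lower semicontinuity, and treat the double-degenerate case exactly via the $\norm*{A/n}_{L^\infty}$, $\norm*{A'/n}_{L^\infty}$ hypotheses. The one imprecision in your final step --- inferring $G\to\infty$ at the endpoints from non-integrability of $1/m$ (strictly, non-integrability of $G''$ only forces $G'$ to blow up; one needs $G'$ itself non-integrable) --- appears in the same unproved form in the paper, which simply asserts that $G_\delta(y)\to\infty$ as $y\to 0,1$.
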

\end{tcolorbox}

\begin{Remark} \hspace{1em}\\
\vspace{-1em}
\begin{itemize}
 \item For the Flory-Huggins potential (\ref{eq:fhpot}) paired with the mobility $m(x)=x(1-x)$ the condition (\ref{eq:startres}) is automatically fulfilled.
 \item The proof also applies for the polynomial potentials $F$ from Assumptions \ref{ass:regular}.
 \item The proof of Theorem \ref{theo:deg1} with Assumptions \ref{ass:deg} will by realized in Sections 5-8 and in Section 9 we give the necessary adjustments for Assumptions \ref{ass:deg2}.
\end{itemize}
\end{Remark}

The rest of the paper is organized in the following way. In Section 5 we introduce a regularized problem that presents an approximation of (\ref{eq:weak_sol_sys}).
In order to pass to the limit with a regularized parameter $\delta \to 0$ some suitable energy and entropy estimates will be derived in Section 6.
The question of the uniform bounds $0 \leq \phi \leq 1$ will be discussed in Section 7. The limiting process is proven in Section 8 and Section 9 is devoted to a discussion of the case with different degenerate mobilities. In order to illustrate properties of the viscoelastic phase separation model we conclude the paper with a series of three-dimensional simulations.


\section{Regularized Problem}
We start by introducing a suitable regularized problem and approximate the degenerate mobility $m$ and the logarithmic potential $F$ by a non-degenerate mobility $m_\delta$ and a smooth potential $F_\delta$ with a parameter $\delta\in (0,\frac{1}{2})$
\begin{equation}
\label{eq:appmob}
m_\delta(x) = \left\{\begin{array}{lr}
m(\delta), &\text{if } x\leq \delta\\
m(x), &\text{if } \delta \leq x \leq 1-\delta \\
m(1-\delta), &\text{if } x \geq 1- \delta.
\end{array}\right.
\end{equation}
Since $F_2$ is already defined on $\mathbb{R}$ and bounded, cf. Assumptions \ref{ass:deg}, we set $F_{2,\delta}=F_2$. Further
\begin{align}
\label{eq:apppot}
F_{1,\delta}(1/2) = F_1(1/2)  \text{ and } F_{1,\delta}^\prime (1/2) = F^\prime_1(1/2), \\
\label{eq:apppot2}
F_{1,\delta}^{\prime\prime}(x) = \left\{\begin{array}{lr}
F_1^{\prime\prime}(\delta), &\text{if } x\leq \delta\\
F_1^{\prime\prime}(x), &\text{if } \delta \leq x \leq 1-\delta \\
F_1^{\prime\prime}(1-\delta), &\text{if } x \geq 1- \delta.
\end{array}\right.
\end{align}
We note that $F_1(x) = F_{1,\delta}(x)$ for $x\in[\delta,1-\delta]$. Next, we define a regularized entropy function $G_\delta$ in the following way
\begin{equation}
\label{eq:appent}
G_\delta(1/2) = 0, \hspace{1em} G^\prime_\delta(1/2) = 0, \hspace{1em} G^{\prime\prime}_\delta(y) = m_\delta(y)^{-1}, \hspace{1em} \text{ for all } y\in\R.
\end{equation}
The system (\ref{eq:full_model}) with $F_\delta$ and $m_\delta$ fulfills the assumptions of Theorem \ref{theo:ex} for every $\delta\in(0,\frac{1}{2})$. Consequently, we have a sequence of regularized weak solutions, denoted by $(\phi_{\delta},q_{\delta},\mu_{\delta},\u_{\delta},\CC_{\delta})$, such that
\begin{align*}
&\phi_{\delta} \in L^{\infty}(0,T;H^1(\Omega))\cap L^2(0,T;H^2(\Omega)),& & q_{\delta},\CC_\delta\in L^\infty(0,T;L^2(\Omega))\cap L^2(0,T;H^1(\Omega)),\\
&\u_{\delta}\in L^{\infty}(0,T;L^2(\Omega))\cap L^2(0,T;V),& &\mu_{\delta},A(\phi_\delta)q_\delta\in L^2(0,T;H^1(\Omega)).
\end{align*}
The family $(\phi_\delta,q_\delta,\u_\delta,\CC_\delta)$ fulfills the weak formulation (\ref{eq:weak_reg}) for every $\delta > 0$, which in the following will be denoted by $(\ref{eq:weak_reg})^\delta$.


\section{Energy and Entropy Estimates}
In order to extract some information about the necessary convergence properties of the approximate solutions we need to derive estimates independent of $\delta$.
Applying the energy inequality (\ref{eq:regularenergy}) we obtain for every $\delta>0$

\begin{align}
\label{eq:energy_app_1}
&\left(\int_\Omega \frac{c_0}{2}\snorm{\nabla\phi_\delta(t)}^2 + F_\delta(\phi_\delta(t))+\frac{1}{2}\snorm{q_\delta(t)}^2 +\frac{1}{2}\snorm{\u_\delta(t)}^2 + \frac{1}{4}\snorm*{\CC_\delta(t)}^2 \dx\right)  \\
\leq&  -\int_0^t\int_\Omega\(\snorm*{\sqrt{m_\delta(\phi_\delta)}\na\mu_\delta} - \snorm*{\na\big(A(\phi_\delta)q_\delta\big)}\)^2 \dx\dta - \int_0^t\int_\Omega\frac{1}{\tau(\phi_\delta)}q_\delta^2\dx\dta  \nonumber\\
& - \varepsilon_1\int_0^t\int_\Omega|\nabla q_\delta|^2\dx\dta -\int_0^t\int_\Omega\eta(\phi_\delta)\snorm*{\Du_\delta}^2\dx\dta - \frac{\varepsilon_1}{2}\int_0^t\int_\Omega|\na\CC_\delta|^2\dx\dta  \nonumber \\
&- \frac{1}{2}\int_0^t\int_\Omega h(\phi_\delta)\snorm*{\tr{\CC_\delta}\CC_\delta}^2\dx\dta + \frac{1}{2}\int_0^t\int_\Omega h(\phi_\delta)\snorm*{\tr{\CC_\delta}}^2\dx\dta \nonumber \\
&+ \left(\int_\Omega \frac{c_0}{2}\snorm{\nabla\phi_\delta(0)}^2 + F_\delta(\phi_\delta(0))+\frac{1}{2}\snorm{q_\delta(0))}^2 +\frac{1}{2}\snorm{\u_\delta(0)}^2 + \frac{1}{4}\snorm*{\CC_\delta(0)}^2 \dx\right). \nonumber
\end{align}
Due to the definition of $F_\delta$, cf. (\ref{eq:apppot}), we can bound the right hand side of (\ref{eq:energy_app_1}) independently of $\delta$. The term $\int_0^t\int_\Omega h(\phi_\delta)\snorm*{\tr{\CC_\delta}}^2$ can be bounded applying the Hölder inequality and taking into account that $\snorm*{\tr{\CC_\delta}}^2\leq d\snorm*{\CC_\delta}^2$. Then the Gronwall lemma implies
\begin{align}
\norm*{\phi_\delta}_{L^\infty(H^1)} +\norm*{q_\delta}_{L^\infty(L^2)} + \norm*{\u_\delta}_{L^\infty(L^2)} +  \norm*{\CC_\delta}_{L^\infty(L^2)} &\leq c, \nonumber\\
\norm*{q_\delta}_{L^2(H^1)} + \norm*{\u_\delta}_{L^2(H^1)} +  \norm*{\CC_\delta}_{L^2(H^1)} &\leq c,\label{eq:est1}\\
\norm*{\snorm*{\sqrt{m_\delta(\phi_\delta)}\na\mu_\delta} - \snorm*{\nabla\big(A(\phi_\delta)q_\delta \big)}}_{L^2(L^2)} &\leq c. \nonumber
\end{align}

Note that $G^{\prime\prime}_\delta$ is bounded and convex and by construction $G_\delta^\prime(\phi_\delta)$ grows like $\phi_\delta$. Applying Lemma~\ref{lema:gtime} we obtain from $(\ref{eq:weak_reg})_1^\delta$ for $\psi=G^\prime_\delta(\phi_\delta)$
\begin{align}
\td\left(\int_\Omega G_\delta(\phi_{\delta})\dx\right) &+ \int_\Omega \(\u_\delta\cdot\nabla\phi_\delta\) G_\delta^\prime(\phi_\delta)\dx  + \int_\Omega m_\delta(\phi_\delta)G^{\prime\prime}_\delta(\phi_\delta)\nabla\mu_\delta\cdot\nabla\phi_\delta \dx \nonumber\\
& = \int_\Omega m_\delta(\phi_\delta)G^{\prime\prime}_\delta(\phi_\delta)\na\big(A(\phi_\delta)q_\delta\big)\cdot\na\phi_\delta \dx. \label{eq:entropy}
\end{align}
Using  $\div{\u_\delta} = 0$ and $m_\delta(\phi_\delta) G_\delta^{\prime\prime}(\phi_\delta) = 1$ we find
\begin{equation*}
\td\left(\int_\Omega G_\delta(\phi_{\delta})\dx\right)  + \int_\Omega \nabla\mu_\delta\cdot\nabla\phi_\delta \dx = \int_\Omega \na\big(A(\phi_\delta)q_\delta\big)\cdot\na\phi_\delta \dx.
\end{equation*}
Including $(\ref{eq:weak_reg})_3^\delta$, integrating by parts and applying the Hölder and Young inequalities yield
\begin{align}
\label{eq:degenchnsp}
&\td\left(\int_\Omega G_\delta(\phi_\delta)\dx\right) + c_0\norm*{\Delta\phi_\delta}_{L^2}^2 + \int_\Omega F_{1,\delta}^{\prime\prime}(\phi_\delta)\snorm*{\nabla\phi_\delta(t)}^2\dx  \nonumber \\
&\leq c(\sigma)\norm*{F_2^{\prime\prime}}_{L^\infty}\norm*{\nabla\phi_\delta}_{L^2}^2 + \int_\Omega \na\big(A(\phi_\delta)q_\delta\big)\cdot\na\phi_\delta \dx,
\end{align}
where $\sigma < 1$ and $c(\sigma)\approx \sigma^{-1}$.
Applying (\ref{eq:Acontrol}) for the last term in (\ref{eq:degenchnsp}) we obtain
\begin{align}
 &\td\left(\int_\Omega G_\delta(\phi_\delta(t))\dx\right) + \frac{c_0}{2}\norm*{\Delta\phi_\delta}_{L^2}^2 + \int_\Omega F_{1,\delta}^{\prime\prime}(\phi_\delta(t))\snorm*{\nabla\phi_\delta(t)}^2\dx \nonumber \\
&\leq c(c_0,A,A^\prime,\sigma)\norm*{\na\phi}_{L^2}^2(\norm*{q}_{L^2}^2 + \norm*{\na q}_{L^2}^2) + c(\sigma)\norm*{F_2^{\prime\prime}}_{L^\infty}\norm*{\nabla\phi_\delta}_{L^2}^2 \label{eq:deltagronwall1}.
\end{align}
We can  see that the right hand side (\ref{eq:deltagronwall1}) is bounded in $L^1(0,T)$ due to a priori estimates (\ref{eq:est1}).
Using the Gronwall lemma and analogous estimates as in (\ref{eq:Al2}) we obtain the following additional estimates independently of $\delta$
\begin{align}
\norm*{\Delta\phi_\delta}_{L^2(L^2)} + \norm*{\nabla\big(A(\phi_\delta)q_\delta\big)}_{L^2(L^2)} + \norm*{\int_\Omega G_\delta(\phi_\delta) \dx}_{L^\infty(0,T)}&\leq c,\label{eq:est2} \\
\text{ which implies }  \norm*{\sqrt{m_\delta(\phi_\delta)}\na\mu_\delta}_2^2 &\leq c.
\end{align}
Due the fact that $m_\delta$ degenerates at $x=0$ and $x=1$ as $\delta\to 0$ we have no information on the weak convergence of $\na\mu_\delta$.
Moreover, following the lines of \cite{Brunk.} we can also find a priori estimates for the time derivatives
\begin{align}
\norm*{\phi_\delta^\prime}_{L^2(H^{-1})}, \hspace{1em}\norm*{q_\delta^\prime}_{L^{4/3}(H^{-1})},\hspace{1em} \norm*{\u_\delta^\prime}_{L^2(V^\prime)}, \hspace{1em}  \norm*{\CC_\delta^\prime}_{L^{4/3}(H^{-1})} &\leq c.
\end{align}

Consequently we have, up to a subsequence, the following convergence results for $\delta\to 0$
\begin{align}
&\phi_\delta \rightharpoonup^{\star} \phi \text{ in } L^\infty(0,T;H^1(\Omega)) & q_\delta &\rightharpoonup^{\star} q \text{ in } L^\infty(0,T;L^2(\Omega)), \nonumber\\
&\phi_\delta \rightharpoonup \phi \text{ in } L^2(0,T;H^2(\Omega)) &q_\delta &\rightharpoonup q \text{ in } L^2(0,T;H^1(\Omega)),\nonumber\\
&\phi^{\prime}_\delta \rightharpoonup \phi^{\prime} \text{ in } L^2(0,T;(H^{1}(\Omega))^*) &q^{\prime}_\delta &\rightharpoonup q^{\prime} \text{ in } L^{4/3}(0,T;(H^{1}(\Omega))^*),\nonumber\\
&\phi_\delta \rightarrow \phi \text{ in } L^2(0,T;H^1(\Omega)) &q_\delta &\rightarrow q \text{ in } L^2(0,T;L^p(\Omega)),	\nonumber\\
&\u_\delta \rightharpoonup^{\star} \u \text{ in } L^\infty(0,T;H) 					&\CC_\delta& \rightharpoonup^{\star} \CC \text{ in } L^\infty(0,T;L^2(\Omega)), 		\nonumber		\\
&\u_\delta \rightharpoonup \u \text{ in } L^2(0,T;V) 			   					&\CC_\delta& \rightharpoonup \CC \text{ in } L^2(0,T;H^1(\Omega)), &\nonumber\\
&\u^{\prime}_\delta \rightharpoonup \u^{\prime} \text{ in } L^2(0,T;V^\star)       	&\CC^{\prime}_\delta& \rightharpoonup \CC^{\prime} \text{ in } L^{4/3}(0,T;(H^{1}(\Omega))^*), &\nonumber\\
&\u_\delta \rightarrow \u \text{ in } L^2(0,T;L^4_{\text{div}}(\Omega)) 						&\CC_\delta& \rightarrow \CC \text{ in } L^2(0,T;L^p(\Omega)). & \label{eq:convergencetable}
\end{align}
Moreover, $\phi_\delta,\nabla\phi_\delta$ converge a.e. in $\Omega\times(0,T)$.
We set $\Jh_\delta = \sqrt{m_\delta(\phi_\delta)}\nabla\mu_\delta$. Due to a priori bound on $\sqrt{m_\delta(\phi_\delta)}\nabla\mu_\delta$ in $L^2(0,T;L^2(\Omega))$ we obtain the weak convergence to some limit $z\in\LL$. Since $\sqrt{m_\delta(\phi_\delta)}$ is absolutely bounded and converge a.e. to $m(\phi)$, see Assumption \ref{ass:deg}, we can easily deduce
\begin{equation}
\label{eq:Jconv}
\Jh_\delta \rightharpoonup \Jh \text{ in } L^2(0,T;L^2(\Omega)), \qquad \J_\delta \rightharpoonup \J  \text{ in } L^2(0,T;L^2(\Omega)).
\end{equation}


\section{$L^\infty$-estimate for the phase variable}
The aim of this section is to derive uniform boundedness of $\phi_\delta$. Indeed, following \cite{Abels.2013,Boyer.1999,Elliott.1996} we can show the following lemma.
{\cred
\begin{Lemma}
Let $\phi_\delta$ be the solution of the regularized problem $(\ref{eq:weak_reg})^\delta_1$. Then
$$
\phi_\delta(x,t) \to  \phi(x,t) \ \mbox{  a.e. } (x,t) \in \Omega\times(0,T) \ \mbox{ uniformly as } \ \delta\to 0.
$$
Furthermore, the limit $\phi(x,t)\in[0,1]$ for a.a $(x,t)\in\Omega\times(0,T).$
\end{Lemma}}
\begin{proof}
By the definition of $G$ we have $G(y)\geq 0, G^\prime(y)\geq 0$ if $y\geq 1/2$ and  $G(y)\geq 0, G^\prime(y)\leq 0$ if $y\leq 1/2$, cf. (\ref{eq:defentropy}). For $y > 1$ it holds
\begin{align*}
G_\delta(y) &= G_\delta(1-\delta) + G^\prime_\delta(1-\delta)\Big(y-(1-\delta)\Big) + \frac{1}{2}G^{\prime\prime}_\delta(1-\delta)\Big(y-(1-\delta)\Big)^2 \\
&= G(1-\delta) + G^\prime(1-\delta)\Big(y-(1-\delta)\Big) + \frac{1}{2}G^{\prime\prime}(1-\delta)\Big(y-(1-\delta)\Big)^2 \\
&\geq \frac{1}{2} \frac{1}{m(1-\delta)}(y-1)^2.
\end{align*}
Analogously, we have for $y < 0$
\begin{align*}
G_\delta(y) &= G_\delta(\delta) + G^\prime_\delta(\delta)\Big(y-\delta\Big) + \frac{1}{2}G^{\prime\prime}_\delta(\delta)\Big(y-\delta\Big)^2 \\
&= G(\delta) + G^\prime(\delta)\Big(y-\delta\Big) + \frac{1}{2}G^{\prime\prime}(\delta)\Big(y-\delta\Big)^2 \\
&\geq \frac{1}{2} \frac{1}{m(\delta)}y^2.
\end{align*}	
These estimates imply two inequalities
\begin{align}
\int\displaylimits_{\Omega\cap \{\phi_\delta > 1\}} (\phi_\delta(x,t) - 1)^2 \dx &\leq 2m(1-\delta)\int_\Omega G_\delta(\phi_\delta(t)) \dx\rightarrow 0, \nonumber\\
\int\displaylimits_{\Omega\cap \{\phi_\delta < 0\}} \phi_\delta(x,t)^2 \dx &\leq 2m(\delta)\int_\Omega G_\delta(\phi_\delta(t)) \dx\rightarrow 0. \label{eq:phi_inf_deg}
\end{align}
The limit in (\ref{eq:phi_inf_deg}) follows from the Vitali Theorem \cite{Folland.2011} realizing that $m_\delta(\delta)\to 0$ and $m_\delta(1-\delta)\to 0$ for $\delta\to 0$. We have shown that
\begin{align*}
\int\displaylimits_{\Omega\cap \{\phi > 1\}} (\phi(x,t) - 1)^2 \dx = 0, \hspace{2em}	\int\displaylimits_{\Omega\cap \{\phi < 0\}} \phi(x,t)^2 \dx  = 0.
\end{align*}
In other words, $\phi(x,t) \leq 1$ and $\phi(x,t) \geq 0$ for a.e. $(x,t)\in\Omega\times(0,T)$.

\end{proof}


\section{Limiting process}
The aim of this section is to pass to the limit $\delta\to 0$ in all relevant terms of the weak formulation $(\ref{eq:weak_reg})^\delta$.
We will only consider the terms which are not covered in \cite{Brunk.}, i.e.~the terms containing $m_\delta, F_\delta$ or $\J_\delta$. First we recall a useful lemma from \cite{Brunk.}.
\begin{Lemma}
	\label{lem:strongconv}
	Suppose that there are two sequences $\{u_n\}_{n=1}^\infty, \{v_n\}_{n=1}^\infty$ and a bounded domain $D$ with the following properties
	\begin{enumerate}
		\item $u_n \rightarrow u$ a.e in $D$ as $n \rightarrow \infty$  and $\norm*{u_n}_\infty \leq c < \infty$ for all $n$
		\item $v_n \rightharpoonup v$ in $L^2(D)$  as $n \rightarrow \infty.$
	\end{enumerate}
	Then the product $u_nv_n$ converges weakly to $uv$ in $L^2(D)$. If $v_n$ is strongly convergent then the results imply strong convergence in $L^2(D)$.

\end{Lemma}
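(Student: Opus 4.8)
The plan is to prove the weak-convergence assertion first and then obtain the strong-convergence refinement by the same mechanism. The guiding principle is that, although one cannot directly pass to the limit in a product of two sequences that converge only weakly (or merely almost everywhere), the uniform $L^\infty$ bound on $\{u_n\}$ together with its a.e.\ convergence upgrades the relevant factor to \emph{strong} $L^2$ convergence, which may then be paired against the bounded sequence $\{v_n\}$.

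First I would record two preliminaries. Since $\norm*{u_n}_\infty\le c$ and $u_n\to u$ a.e., the limit obeys $\snorm*{u}\le c$ a.e., so $u\in L^\infty(D)$; consequently $uv\in L^2(D)$ and the candidate limit is well defined. Second, a weakly convergent sequence in $L^2(D)$ is bounded (Banach--Steinhaus), so there is $C$ with $\norm*{v_n}_{L^2(D)}\le C$ for all $n$, and together with the bound on $u_n$ this gives $\norm*{u_nv_n}_{L^2(D)}\le cC$, so $\{u_nv_n\}$ is itself bounded in $L^2(D)$.

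The heart of the argument is the decomposition, carried out for an arbitrary fixed $\varphi\in L^2(D)$,
\begin{equation*}
\int_D u_nv_n\varphi \dx = \int_D \big(u_n-u\big)\varphi\, v_n \dx + \int_D v_n\,(u\varphi)\dx.
\end{equation*}
For the second term, $u\varphi\in L^2(D)$ because $u\in L^\infty(D)$, so the weak convergence $v_n\rightharpoonup v$ gives $\int_D v_n(u\varphi)\dx\to\int_D uv\varphi\dx$ directly. For the first term I would invoke dominated convergence: $(u_n-u)\varphi\to 0$ a.e.\ and $\snorm*{(u_n-u)\varphi}^2\le (2c)^2\snorm*{\varphi}^2\in L^1(D)$, whence $(u_n-u)\varphi\to 0$ strongly in $L^2(D)$; pairing this strong null sequence against the bounded $\{v_n\}$ via Cauchy--Schwarz yields $\int_D (u_n-u)\varphi\, v_n\dx\to 0$. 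Combining the two terms shows $\int_D u_nv_n\varphi\dx\to\int_D uv\varphi\dx$ for every $\varphi\in L^2(D)$, i.e.\ $u_nv_n\rightharpoonup uv$ in $L^2(D)$.

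For the strong case, assume in addition $v_n\to v$ in $L^2(D)$ and estimate
\begin{equation*}
\norm*{u_nv_n-uv}_{L^2(D)} \le \norm*{u_n(v_n-v)}_{L^2(D)} + \norm*{(u_n-u)v}_{L^2(D)} \le c\norm*{v_n-v}_{L^2(D)} + \norm*{(u_n-u)v}_{L^2(D)}.
\end{equation*}
The first summand tends to zero by the assumed strong convergence, and the second tends to zero by dominated convergence exactly as above, now with $v\in L^2(D)$ in the role of $\varphi$. The only genuinely delicate point is the first term of the decomposition: one must resist treating $(u_n-u)v_n$ as a product of two separately convergent factors and instead reassign the test function so that dominated convergence turns $(u_n-u)\varphi$ into a strong null sequence; everything else is Cauchy--Schwarz together with weak convergence against a fixed element of $L^2(D)$.
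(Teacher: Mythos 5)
Your proof is correct: the decomposition $\int_D u_nv_n\varphi \dx = \int_D (u_n-u)\varphi\, v_n \dx + \int_D v_n(u\varphi)\dx$, with dominated convergence upgrading $(u_n-u)\varphi$ to a strong $L^2(D)$ null sequence and Cauchy--Schwarz against the bounded sequence $\{v_n\}$, together with the analogous splitting for the strong case, is exactly the standard argument for this product-convergence lemma. Note that the present paper does not prove the lemma at all---it is recalled without proof from the companion work \cite{Brunk.}---and your argument coincides with the usual proof given in such references, so there is nothing to flag.
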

\subsection{Cahn-Hilliard equation}
In the Cahn-Hilliard equation $(\ref{eq:weak_reg})_1^\delta$ we first consider the limiting process in the term
\begin{align*}
T_1(\delta):=\int_0^T\int_\Omega \big(\J_\delta(t) - \J(t)\big)\cdot\na\psi\varphi(t) \dx\dt.
\end{align*}
Due to the weak convergence of $\J_\delta$ in $\LL$, cf. (\ref{eq:Jconv}) we have $T_1(\delta)\to 0$ as $\delta\to 0$. Now we estimate the coupling term with the bulk stress equation $(\ref{eq:weak_sol_sys})_2$.
\begin{align*}
T_2(\delta):=\int_0^T\int_\Omega m_\delta(\phi_\delta(t))\nabla\Big(A(\phi_\delta(t))q_\delta(t)\Big)\cdot\nabla\psi\varphi(t) - m(\phi(t))\nabla\Big(A(\phi(t))q(t)\Big)\cdot\nabla\psi\varphi(t) \dx\dt.
\end{align*}
One can obtain weak convergence of $\nabla\big(A(\phi_\delta)q_\delta \big)$ in $\LL$ by virtue of (\ref{eq:convergencetable}), see \cite{Brunk.} for a proof. Further by Lemma \ref{lem:strongconv} using the continuity and boundedness of $m_\delta(\phi_\delta)$, see Assumption \ref{ass:deg} and (\ref{eq:appmob}) we find that $m_\delta(\phi_\delta)\na\psi\varphi$ converges strongly to $m(\phi) \nabla \psi \varphi$
in $\LL$. Therefore, $T_2(\delta)\to 0$ as $\delta\to 0$.

\subsection{Bulk stress equation}
In the bulk stress equation $(\ref{eq:weak_reg})^\delta_2$ the only new term comes from the coupling to the Cahn-Hilliard equation $(\ref{eq:weak_reg})^\delta_1$
\begin{align*}
T_3(\delta):=\int_0^T\int_\Omega \J_\delta(t)\cdot\nabla\Big(A(\phi_\delta(t))\zeta \Big)\varphi(t) - \J(t)\nabla\cdot\Big(A(\phi(t))\zeta \Big)\varphi(t) \dx\dt .
\end{align*}
Due to the weak convergence of $\J_\delta$ in $\LL$, (\ref{eq:Jconv}) and the strong convergence of $\nabla(A(\phi_\delta)\zeta)$ in $\LL$, which is obtained by virtue of (\ref{eq:convergencetable}) and $\zeta\in H^1(\Omega)$, we deduce that $T_3(\delta)\to 0$ as $\delta\to 0$.

\subsection{Cahn-Hilliard Flux}
First, let us consider the flux terms in the weak formulation, $(\ref{eq:weak_reg})_3^\delta$ and $(\ref{eq:weak_sol_sys})_3$. The left hand side of $(\ref{eq:weak_reg})_3^\delta$ and $(\ref{eq:weak_sol_sys}_3)$ is given by
\begin{align*}
T_4(\delta):=\int_0^T\int_\Omega (\J_\delta(t)-\J)\cdot\bxi\varphi(t).
\end{align*}
Due to the weak convergence of $\J_\delta$ in $\LL$, see (\ref{eq:Jconv}), we deduce $T_4(\delta)\to 0$ as $\delta\to 0$.
Next we continue with the right hand side of $(\ref{eq:weak_reg})_3^\delta$ and $(\ref{eq:weak_sol_sys})_3$
\begin{align*}
T_5(\delta)&:=\int_0^T\int_\Omega \Delta\phi_\delta(t)\mathrm{div}\Big(m_\delta(\phi(t))\bxi\Big)\varphi(t)  - \Delta\phi(t)\mathrm{div}\Big(m(\phi(t))\bxi\Big)\varphi(t) \dx\dt\\
&=T_{5,1}(\delta) + T_{5,2}(\delta).
\end{align*}
Firstly,
\begin{align*}
T_{5,1}(\delta):=\int_0^T\int_\Omega \Delta\phi_\delta(t)m_\delta(\phi(t))\div{\bxi}\varphi(t)  - \Delta\phi(t)m(\phi(t))\div{ \bxi}\varphi(t) \dx\dt
\end{align*}
goes to zero due to the weak convergence of $\Delta\phi_\delta$ in $\LL$, cf. (\ref{eq:convergencetable}), and the strong convergence of $m_\delta(\phi_\delta)\div{\bxi}\varphi$ in $\LL$, cf.~Lemma (\ref{lem:strongconv}) and $\bxi\in H^1(\Omega)\cap L^\infty(\Omega)$.\\[0.5em]
The second term $T_{5,2}(\delta)$ can be treated as follows
\begin{align*}
T_{5,2}(\delta):=&\int_0^T\int_\Omega \Big(m^\prime_\delta(\phi_\delta(t))\Delta\phi_\delta(t)\na\phi_\delta(t) - m^\prime(\phi(t))\Delta\phi(t)\na\phi(t)\Big)\cdot\bxi\varphi(t) \dx\dt \\
= &\int_0^T\int_\Omega \Big(\Delta\phi_\delta(t) - \Delta\phi(t)\Big)\na\phi_\delta(t) m^\prime_\delta(\phi_\delta(t))\cdot\bxi\varphi(t) \dx\dt \\
+ &\int_0^T\int_\Omega \Delta\phi(t)\Big(m^\prime_\delta(\phi_\delta(t))\na\phi_\delta(t) - m^\prime(\phi(t))\na\phi(t)\Big)\cdot\bxi\varphi(t) \dx\dt.
\end{align*}
We realize that due to the weak convergence of $\Delta\phi_\delta$ in $\LL$, cf. (\ref{eq:convergencetable}), and $\bxi\varphi$ in $L^\infty(\Omega\times(0,T))$ we have to study the strong convergence of $m^\prime_\delta(\phi_\delta)\na\phi_\delta$ in $\LL$.  This
already imply the convergence of $T_{5,2}(\delta)\to 0$ as $\delta \to 0.$
Let us consider the strong convergence of $m^\prime_\delta(\phi_\delta)\na\phi_\delta$ and split the integral in the following way
\begin{align}
&\int_0^T\int_\Omega \snorm*{m^\prime_\delta(\phi_\delta(t))\na\phi_\delta(t) - m^\prime(\phi(t))\na\phi(t)}^2 \dx \dt \nonumber\\
= &\int\displaylimits_{\Omega\times(0,T) \cap \{\phi=\{0,1\}\} }\snorm*{m^\prime_\delta(\phi_\delta(t))\na\phi_\delta(t) - m^\prime(\phi(t))\na\phi(t)}^2 \dx\dt \label{eq:mdeltastrong1}\\
+ &\int\displaylimits_{\Omega\times(0,T) \cap \{\phi\in(0,1)\} } \snorm*{m^\prime_\delta(\phi_\delta(t))\na\phi_\delta(t) - m^\prime(\phi(t))\na\phi(t)}^2 \dx\dt  \label{eq:mdeltastrong2}
\end{align}

For $(\ref{eq:mdeltastrong1})$ we note that $\na\phi=0$ a.e. on the set $\{\phi=0 \text{ or } \phi=1\}$, see Lemma 7.7 in \cite{Gilbarg.1977}, and derive
\begin{align*}
\int\displaylimits_{\Omega\times(0,T) \cap \{\phi=\{0,1\}\} } \snorm*{m^\prime_\delta(\phi_\delta(t))\na\phi_\delta(t)}^2 \dx\dt &\leq \norm*{m_\delta^\prime}_\infty^2\int\displaylimits_{\Omega_T \cap \{\phi=\{0,1\}\} } \snorm*{\na\phi_\delta(t)}^2 \dx\dt\\
&\rightarrow c\int\displaylimits_{\Omega\times(0,T) \cap \{\phi=\{0,1\}\} } \snorm*{\na\phi(t)}^2 \dx\dt = 0.
\end{align*}
In the latter  we have used the strong convergence of $\na\phi_\delta$ in $\LL$, cf. (\ref{eq:convergencetable}).	
For the second integral (\ref{eq:mdeltastrong2}) we can conclude the convergence $m^\prime_\delta(\phi_\delta) \to m^\prime(\phi)$ on the set $\{0< \phi < 1\}$. Since the mobility function $m_\delta(\phi_\delta)$ converges a.e. on the set $\{0< \phi < 1\}$ we obtain $m^\prime_\delta(\phi_\delta)\na\phi_\delta$ converges to $m^\prime(\phi)\na\phi$ for a.e $(x,t)\in\Omega\times(0,T)$. This holds due to the strong convergence of $\na\phi_\delta$ in $\LL$, cf. (\ref{eq:convergencetable}). Therefore, (\ref{eq:mdeltastrong2}) goes to zero for $\delta\to 0$ by the generalized Lebesque convergence theorem. Above calculations imply
\begin{equation*}
m^\prime_\delta(\phi_\delta)\na\phi_\delta \rightarrow m^\prime(\phi)\na\phi \text{ in } \LL.
\end{equation*}
Hence, we conclude $T_{5,2}(\delta)\to 0$ as $\delta\to 0$.\\[0.3em]
Next we focus on the integral containing the potential in $(\ref{eq:weak_reg})_3^\delta$ and $(\ref{eq:weak_sol_sys})_3$, which can be rewritten as
\begin{align}
T_6(\delta):=&\int_0^T\int_\Omega \Big( (m_\delta F^{\prime\prime}_\delta)(\phi_\delta(t))\na\phi_\delta(t)  - (m F^{\prime\prime})(\phi(t))\na\phi(t)\Big)\cdot\bxi\varphi(t) \dx\dt \nonumber\\
=& \int_0^T\int_\Omega \Big((m_\delta F^{\prime\prime}_\delta)(\phi_\delta(t)) - (mF^{\prime\prime})(\phi(t))\Big)\na\phi_\delta(t)\cdot\bxi\varphi(t) \dx\dt \label{eq:convpot1} \\
+& \int_0^T\int_\Omega (mF^{\prime\prime})(\phi(t))\Big(\na\phi_\delta(t) - \na\phi(t)\Big)\cdot\bxi\varphi(t) \dx\dt. \label{eq:convpot2}
\end{align}
The second term (\ref{eq:convpot2}) tends to zero due to the bounds on $(mF^{\prime\prime})(\phi)$, cf.~Assumption \ref{ass:deg}, and the strong convergence of $\na\phi_\delta$ in $\LL$, cf.~(\ref{eq:convergencetable}).\\[0.3em]
For the first term (\ref{eq:convpot1}) we have to show that $(m_\delta F^{\prime\prime}_\delta)(\phi_\delta)$ converges to $(mF^{\prime\prime})(\phi)$ a.e. in $\Omega\times(0,T)$. This is clear for $\phi \in (0,1)$ and $\delta$ small enough, because $(mF^{\prime\prime})(\phi_\delta) = (m_\delta F^{\prime\prime}_\delta)(\phi_\delta)$, cf.~Assumption~\ref{ass:deg}. Let us consider the case $\phi = 1$, $\phi = 0$ can be treated analogously.
First, we can see that for $1/2 \leq \phi_\delta(t,x)\leq 1-\delta$ we have the equality
\begin{equation*}
(m_\delta F_\delta^{\prime\prime})(\phi_\delta(t,x)) = (mF^{\prime\prime})(\phi_\delta(t,x)).
\end{equation*}
Second, we observe that for $\phi_\delta > 1-\delta$ we have
\begin{equation*}
(m_\delta F_\delta^{\prime\prime})(\phi_\delta(t,x)) = (mF_{1,\delta}^{\prime\prime})(1-\delta) + m(1-\delta)F_{2}^{\prime\prime}(\phi_\delta(t,x)).
\end{equation*}
Consequently, in both cases we have obtained that
\begin{equation*}
(m_\delta F^{\prime\prime}_\delta(\phi_\delta)) \longrightarrow (mF^{\prime\prime})(\phi) \text{ a.e. in } \Omega\times(0,T).
\end{equation*}
.  \\[0.5em]
\subsection{Navier-Stokes equation}
We continue by discussing the stress term in the Navier-Stokes equation $(\ref{eq:weak_reg})^\delta_4$ and $(\ref{eq:weak_sol_sys})_4$ arising from the coupling to the Cahn-Hilliard equation
\begin{align*}
T_7(\delta):=&\int_0^T\int_\Omega \Big(\Delta\phi_\delta(t)\na\phi_\delta(t) -\Delta\phi(t)\na\phi(t)\Big)\cdot\vv\varphi(t)\dx\dt = T_{7,1}(\delta) + T_{7,2}(\delta) \\
=&\int_0^T\int_\Omega \Big(\Delta\phi_\delta(t) - \Delta\phi(t)\Big)\na\phi_\delta(t)\cdot\vv\varphi(t) \dx\dt + \int_0^T\int_\Omega \Delta\phi(t)\Big(\na\phi_\delta(t)-\na\phi(t)\Big)\cdot\vv\varphi(t) \dx\dt.
\end{align*}
The first term $T_{7,1}(\delta)\to 0$ for $\delta \to 0$, due to the weak convergence of $\Delta\phi_\delta$ in $\LL$, cf.~(\ref{eq:convergencetable}), if  $\na\phi_\delta\vv\varphi\in L^2(0,T;L^2(\Omega))$. Indeed, due to the strong convergence of $\na\phi_\delta$ in $L^2(0,T;L^3(\Omega))$ we obtain  $\na\phi_\delta\vv\varphi\in L^2(0,T;L^2(\Omega))$ taking $\varphi\vv\in L^\infty(0,T;L^6(\Omega))$.
The second term $T_{7,2}(\delta)$ can be controlled in the following way
\begin{align*}
T_{7,2}(\delta):=&\int_0^T\int_\Omega \Delta\phi(t)\Big(\na\phi_\delta(t)-\na\phi(t)\Big)\cdot\vv\varphi(t) \dx\dt \\
&\leq  \max\limits_{t\in [0,T]}\snorm*{\varphi(t)}\norm*{\vv}_6\int_0^T\norm*{\na\phi_\delta - \na\phi}_3\norm*{\Delta\phi}_2 \dt \\
&\leq  \max\limits_{t\in [0,T]}\snorm*{\varphi(t)}\norm*{\vv}_V\norm*{\Delta\phi}_{L^2(L^2)}\norm*{\na\phi_\delta - \na\phi}_{L^2(L^3)}\longrightarrow 0,
\end{align*}
due to the strong convergence of $\na\phi_\delta$ in $L^2(0,T;L^3(\Omega))$. This implies $T_7(\delta)\to 0$ as $\delta\to 0$.\\[0.5em]

\subsection{Stronger degeneracy}
The aim of this section is to show that under some assumptions on the mobility function $m$ we can prove the following result.
\begin{Lemma}
	\label{lem:zeromeas}
	Let $\phi_\delta$ be the solution of the regularized problem $(\ref{eq:weak_reg})^\delta_1$. Further assume that the mobility function satisfies $m^\prime(0)=m^\prime(1)=0$. Then it holds that set \begin{align*}
	\{(x,t)\in\Omega\times(0,T); \phi(x,t)=0 \text{ or } \phi(x,t)=1\}
	\end{align*}
	has zero measure.
\end{Lemma}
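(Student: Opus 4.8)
The plan is to upgrade the uniform entropy bound (\ref{eq:est2}), which through its $L^\infty(0,T)$-norm yields
\begin{equation*}
\int_0^T\!\!\int_\Omega G_\delta(\phi_\delta)\dx\dt\le cT,
\end{equation*}
into a bound on the limiting quantity $G(\phi)$, and then to read off the assertion from the blow-up of $G$ at the endpoints. First I would record two structural facts about the construction (\ref{eq:appent}): since $G_\delta''=1/m_\delta$ and $m_\delta=m$ on $[\delta,1-\delta]$, one has $G_\delta=G$ on $[\delta,1-\delta]$, and moreover $G_\delta\nearrow G$ pointwise on $[0,1]$ as $\delta\searrow0$. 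Combined with the a.e.\ convergence $\phi_\delta\to\phi$ from (\ref{eq:convergencetable}), the monotonicity gives, for every fixed $\delta_0$ and all $\delta\le\delta_0$, the inequality $G_\delta(\phi_\delta)\ge G_{\delta_0}(\phi_\delta)$, hence $\liminf_{\delta\to0}G_\delta(\phi_\delta)\ge G_{\delta_0}(\phi)$ by continuity of $G_{\delta_0}$; letting $\delta_0\to0$ yields $\liminf_{\delta\to0}G_\delta(\phi_\delta)\ge G(\phi)$ a.e. Fatou's lemma then gives
\begin{equation*}
\int_0^T\!\!\int_\Omega G(\phi)\dx\dt\le\liminf_{\delta\to0}\int_0^T\!\!\int_\Omega G_\delta(\phi_\delta)\dx\dt\le cT,
\end{equation*}
so that $G(\phi)\in L^1(\Omega\times(0,T))$ and in particular $G(\phi)<+\infty$ a.e.

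Second I would show that the stronger degeneracy $m'(0)=m'(1)=0$ forces $G$ to blow up at the endpoints. From $G(1/2)=G'(1/2)=0$ and $G''=1/m$ (cf.\ (\ref{eq:defentropy})) one computes for $y<1/2$
\begin{equation*}
G(y)=\int_y^{1/2}\frac{s-y}{m(s)}\dd s\;\xrightarrow[y\to0^+]{}\;\int_0^{1/2}\frac{s}{m(s)}\dd s,
\end{equation*}
by monotone convergence, with the symmetric identity near $y=1$. When $m$ vanishes at least quadratically at the endpoints — which is exactly what $m'(0)=m'(1)=0$ encodes for the mobilities of interest, e.g.\ $m(x)=x^m(1-x)^m$ with $m\ge2$ — the integral $\int_0^{1/2}s/m(s)\dd s$ diverges, whence $G(0^+)=G(1^-)=+\infty$. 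This is precisely the feature absent in the setting of the preceding section: for $m(x)=x(1-x)$ one has $G(0^+)=\ln2<\infty$, which is why there $\phi$ could only be shown to remain in $[0,1]$.

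Finally I would combine the two steps. The second step shows $G(\phi(x,t))=+\infty$ at every point where $\phi(x,t)\in\{0,1\}$, whereas the first shows $G(\phi)<+\infty$ for a.e.\ $(x,t)$. These are compatible only if
\begin{equation*}
\snorm*{\{(x,t)\in\Omega\times(0,T)\mid\phi(x,t)=0\ \text{or}\ \phi(x,t)=1\}}=0,
\end{equation*}
which is the assertion of the lemma.

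I expect the main obstacle to be twofold. The delicate analytic point is the verification that $m'(0)=m'(1)=0$ genuinely produces the divergence $\int_0^{1/2}s/m(s)\dd s=+\infty$, i.e.\ the true blow-up $G(0^+)=G(1^-)=+\infty$; this is the conceptual heart of the lemma and is what separates it from the weaker $\phi\in[0,1]$ statement, so it has to be argued carefully for the admissible class of mobilities. The technical point is the passage to the limit in the entropy bound: the clean Fatou argument above rests on the monotonicity $G_\delta\nearrow G$, which must be extracted from the explicit construction (\ref{eq:appent}) together with the monotonicity of $m$ near the endpoints.
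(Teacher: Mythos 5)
Your proposal is correct and follows essentially the same route as the paper: the uniform entropy bound (\ref{eq:est2}), Fatou's lemma, and the blow-up of the entropy at $0$ and $1$ under $m^\prime(0)=m^\prime(1)=0$ together force the set $\{\phi=0\}\cup\{\phi=1\}$ to be null. The differences are cosmetic---you apply Fatou on the space-time cylinder and obtain $\liminf_{\delta\to 0}G_\delta(\phi_\delta)\ge G(\phi)$ via the monotonicity $G_\delta\nearrow G$, whereas the paper argues pointwise through the three cases $\phi\in(0,1)$, $\phi=0$, $\phi=1$---and your explicit identification of $G(0^+)=\int_0^{1/2} s/m(s)\,\mathrm{d}s$ as the quantity that must diverge makes precise (and correctly flags as the delicate point) what the paper merely asserts.
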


\begin{proof}
	
We suppose that the mobility is strongly degenerate, i.e. $m^\prime(1) = m^\prime(0) = 0$. We can see that $G_\delta(y) \rightarrow \infty$ for $y\to 0$ or $y\to 1$ as $\delta\to 0$. Since $\int_\Omega G_\delta(\phi_\delta(t)) \dx$ is bounded in $L^\infty(0,T)$, see (\ref{eq:est2}), we apply the Lemma of Fatou to get
\begin{equation*}
\int_\Omega \lim \inf\limits_{\delta \to 0} G_\delta(\phi_\delta(t)) \dx \leq c \qquad \text{ a.e. } t\in(0,T).
\end{equation*}
We will now discuss three possible cases for $\phi_\delta$ to prove Lemma \ref{lem:zeromeas}.
\begin{enumerate}
	\item [a)] For $\phi_\delta \in (0,1)$ and $\delta$ small enough we have $G_\delta(\phi_\delta(x,t)) = G(\phi_\delta(x,t))$ and by continuity
	\begin{equation*}
	\lim_{\delta \to 0} G_\delta(\phi_\delta(x,t)) = G(\phi(x,t)).
	\end{equation*}
	\item[b)] For $\phi(x,t)=0$ and any $\delta > 0$ we have
		\begin{equation*}
	\min\{G(\delta),G(\phi_\delta(x,t))\} \leq G_\delta(\phi_\delta(x,t)) \to \infty
	\end{equation*}
	because $G(y)\to \infty$ for $y \to 0$.
	\item [c)] For $\phi(x,t)=1$ and any $\delta > 0$ we have
	\begin{equation*}
	\min\{G(1-\delta),G(\phi_\delta(x,t))\} \leq G_\delta(\phi_\delta(x,t)) \to \infty
	\end{equation*}
	because $G(y)\to \infty$ for $y \to 1$.

\end{enumerate}
Combining the above points we have shown that the set $\{x\in\Omega \mid \phi(x,t)=1 \text{ or } \phi(x,t) = 0 \}$ has zero measure for a.a $t\in(0,T)$.
\end{proof}

{\cred
\begin{Remark}
Note that following the approach presented in \cite{Grun.1995,Passo1998} the result of Lemma~\ref{lem:zeromeas} might be refined. Indeed, for stronger degeneracy, i.e. $m\approx s^n$ near zero and $m\approx(1-s)^n$ near one, for $n$ large enough, one can expect that the volume fraction $\phi$ will be confined in $[\alpha,\beta] \subset (0,1)$. This would require additional technical estimates and further assumptions on the potential $f$ and the parametric function $A$. This may be an interesting topic for future research.
\end{Remark}}

\subsection{Limit in the energy inequality}

In order to pass to the limit in the energy inequality $(\ref{eq:energy_app_1})$ we recall that for $\delta > 0$ we have
\begin{align}
E_\delta:=&\left(\int_\Omega \frac{c_0}{2}\snorm{\nabla\phi_\delta(t)}^2 + F_\delta(\phi_\delta(t))+\frac{1}{2}\snorm{q_\delta(t)}^2 +\frac{1}{2}\snorm{\u_\delta(t)}^2 + \frac{1}{4}\snorm*{\CC_\delta(t)}^2 \dx\right)  \\
\leq&  -\int_0^t\int_\Omega\(\snorm*{\sqrt{m_\delta(\phi_\delta)}\na\mu_\delta} - \snorm*{\na\big(A(\phi_\delta)q_\delta\big)}\)^2 \dx\dta - \int_0^t\int_\Omega\frac{1}{\tau(\phi_\delta)}q_\delta^2\dx\dta  \nonumber\\
& - \varepsilon_1\int_0^t\int_\Omega|\nabla q_\delta|^2\dx\dta -\int_0^t\int_\Omega\eta(\phi_\delta)\snorm{\Du_\delta}^2\dx\dta - \frac{\varepsilon_2}{2}\int_0^t\int_\Omega|\na\CC_\delta|^2\dx\dta  \nonumber \\
&- \frac{1}{2}\int_0^t\int_\Omega h(\phi_\delta)\snorm*{\tr{\CC_\delta}\CC_\delta}^2\dx\dta + \frac{1}{2}\int_0^t\int_\Omega h(\phi_\delta)\tr{\CC_\delta}^2\dx\dta \nonumber \\
&+ \left(\int_\Omega \frac{c_0}{2}\snorm{\nabla\phi_\delta(0)}^2 + F_\delta(\phi_\delta(0))+\frac{1}{2}\snorm{q_\delta(0))}^2 +\frac{1}{2}\snorm{\u_\delta(0)}^2 + \frac{1}{4}\snorm*{\CC_\delta(0)}^2 \dx\right). \nonumber
\end{align}

Following \cite{Brunk.} we can pass to the limit $\delta\to 0$ in $E_\delta$, since we have the necessary strong convergences,
cf.~(\ref{eq:convergencetable}).
We apply the fact that for a weakly converging sequence  $\{g_\delta\}_\delta$ in $L^2(0,t;L^2(\Omega))$ we have
\begin{equation}
\norm*{g}_{L^2(0,t;L^2)}\leq \liminf\limits_{\delta\to 0}\norm{g_\delta}_{L^2(0,t;L^2)}. \label{eq:dissipativelimit}
\end{equation}
Therefore we can pass to the limit in the following terms
\begin{equation*}
\sqrt{\tau(\phi_\delta)^{-1}}q_\delta,\; \na q_\delta, \;\sqrt{\eta(\phi_\delta)}\Du_\delta,\; \na\CC_\delta,\; \sqrt{h(\phi_\delta)}\tr{\CC_\delta}\CC_\delta.
\end{equation*}

Further, relabeling
$\hat{\J}_\delta := \sqrt{m_\delta(\phi_\delta)}\na\mu_\delta$ we can apply the lower semi-continuity of the norm (\ref{eq:dissipativelimit})
for $\hat{\J}_\delta$.  This is possible due to (\ref{eq:Jconv}).
The term $h(\phi_\delta)\tr{\CC_\delta}^2$ can be treated due to the strong convergence of $\CC_\delta$, cf. (\ref{eq:convergencetable}). Consequently, we obtain in the limit $\delta\to 0$
\begin{align}
&\left(\int_\Omega \frac{c_0}{2}\snorm{\nabla\phi(t)}^2 + F(\phi(t))+\frac{1}{2}\snorm{q(t)}^2 +\frac{1}{2}\snorm{\u(t)}^2 + \frac{1}{4}\snorm*{\CC(t)}^2 \dx\right)  \\
\leq&  -\int_0^t\int_\Omega\(\snorm*{\Jh} - \snorm*{\na\big(A(\phi)q\big)}\)^2 \dx\dta - \int_0^t\int_\Omega\frac{1}{\tau(\phi)}q^2\dx\dta  \nonumber\\
& - \varepsilon_1\int_0^t\int_\Omega|\nabla q|^2\dx\dta -\int_0^t\int_\Omega\eta(\phi)\snorm*{\Du}^2\dx\dta - \frac{\varepsilon_2}{2}\int_0^t\int_\Omega|\na\CC|^2\dx\dta  \nonumber \\
&- \frac{1}{2}\int_0^t\int_\Omega h(\phi)\snorm*{\tr{\CC}\CC}^2\dx\dta + \frac{1}{2}\int_0^t\int_\Omega h(\phi)\tr{\CC}^2\dx\dta \nonumber \\
&+ \left(\int_\Omega \frac{c_0}{2}\snorm{\nabla\phi_0}^2 + F(\phi_0)+\frac{1}{2}\snorm{q_0}^2 +\frac{1}{2}\snorm{\u_0}^2 + \frac{1}{4}\snorm*{\CC_0}^2 \dx\right), \nonumber
\end{align}
which concludes the proof.


\section{Different degenerate mobilities}
In this section we discuss the case of two different degenerate mobilities, i.e.~$m(\phi)\neq n(\phi)$.
A quite frequently used assumption is $m(\phi)=n(\phi)^2$. Let us approximate $A(\phi)$ by $A_\delta(\phi_\delta)$ suitably which we will specify later.
Recall that for the entropy function $G_\delta$ we have , cf.~(\ref{eq:entropy})
\begin{align*}
\td\left(\int_\Omega G_\delta(\phi_{\delta})\dx\right) &+ \int_\Omega \(\u_\delta\cdot\nabla\phi_\delta\) G_\delta^\prime(\phi_\delta)\dx  + \int_\Omega m_\delta(\phi_\delta)G^{\prime\prime}_\delta(\phi_\delta)\nabla\mu_\delta\cdot\nabla\phi_\delta \dx \\
& = \int_\Omega n_\delta(\phi_\delta)G^{\prime\prime}_\delta(\phi_\delta)\na\big(A_\delta(\phi_\delta)q_\delta\big)\cdot\na\phi_\delta \dx.
\end{align*}
By the construction the entropy function $G_\delta$ satisfies, cf.~(\ref{eq:appent}),
\begin{align*}
m_\delta(\phi_\delta) G^{\prime\prime}_\delta(\phi_\delta) = 1, \qquad n_\delta(\phi_\delta) G^{\prime\prime}_\delta(\phi_\delta) = \frac{1}{n_\delta(\phi_\delta)}.
\end{align*}
Using these properties and the divergence freedom of $\u_\delta$ we observe
\begin{align}
\td\left(\int_\Omega G_\delta(\phi_{\delta})\dx\right) + \int_\Omega \nabla\mu_\delta\cdot\nabla\phi_\delta \dx  = \int_\Omega \frac{1}{n_\delta(\phi_\delta)}\na\big(A_\delta(\phi_\delta)q_\delta\big)\cdot\na\phi_\delta \dx. \label{eq:Gev}
\end{align}
Considering the right hand side of (\ref{eq:Gev}) by expansion of the gradient we find
\begin{align*}
\int_\Omega \frac{1}{n_\delta(\phi_\delta)}\na\big(A_\delta(\phi_\delta)q_\delta\big)\cdot\na\phi_\delta \dx &= \int_\Omega \frac{A_\delta(\phi_\delta)}{n_\delta(\phi_\delta)}\na q_\delta\cdot\na\phi_\delta + \frac{A_\delta^\prime(\phi_\delta)}{n_\delta(\phi_\delta)}q_\delta\snorm*{\na\phi_\delta}^2 \dx \\
&\leq \norm*{\frac{A_\delta(\phi_\delta)}{n_\delta(\phi_\delta)}}_{L^\infty}\norm*{\na q_\delta}_{L^2}\norm*{\na\phi_\delta}_{L^2} + \norm*{\frac{A_\delta^\prime(\phi_\delta)}{n_\delta(\phi_\delta)}}_{L^\infty}\norm*{q_\delta}_{L^2}\norm*{\na\phi_\delta}_{L^4}^2.
\end{align*}

Now, applying the interpolation inequalities and analogous calculations as in Section 6 we obtain
\begin{align}
&\td\left(\int_\Omega G_\delta(\phi_{\delta})\dx\right) + (c_0 -\sigma)\norm*{\Delta\phi_\delta}_{L^2}^2 + \int_\Omega F_{1,\delta}^{\prime\prime}(\phi_\delta)\snorm*{\na\phi_\delta}^2 \dx  \label{eq:curcial}\\
&\leq  c(\sigma)\(\norm*{F_2^{\prime\prime}}_{L^\infty} + \norm*{\frac{A_\delta^\prime(\phi_\delta)}{n_\delta(\phi_\delta)}}_{L^\infty}^2\norm*{q_\delta}_{L^2}^2 \)\norm*{\na\phi_\delta}_{L^2}^2 + \norm*{\frac{A_\delta(\phi_\delta)}{n_\delta(\phi_\delta)}}_{L^\infty}\norm*{\na q_\delta}_{L^2}\norm*{\na\phi_\delta}_{L^2}. \nonumber
\end{align}
Assuming that
\begin{equation}
\norm*{\frac{A_\delta(\phi_\delta)}{n_\delta(\phi_\delta)}}_{L^\infty}\leq c \text{ and } \norm*{\frac{A_\delta^\prime(\phi_\delta)}{n_\delta(\phi_\delta)}}_{L^\infty}\leq c, \label{eq:twodegaass}
\end{equation}
independently of $\delta$, we recover the estimates
\begin{align}
\norm*{\Delta\phi_\delta}_{L^2(L^2)} + \norm*{\nabla\big(A(\phi_\delta)q_\delta\big)}_{L^2(L^2)} + \norm*{\int_\Omega G_\delta(\phi_\delta) \dx}_{L^\infty(0,T)}&\leq c. \label{eq:twodegestimates}
\end{align}
Taking into account (\ref{eq:twodegestimates}) the results of Sections 5-8 can be applied analogously to show the existence of a
weak solution for the case $m(\phi)=n(\phi)^2$. Note that now we have approximated $A$ by $A_\delta$ suitably such that (\ref{eq:twodegaass}) holds.  Indeed, without these additional assumptions on $A_\delta$  and $A$ the right hand side in (\ref{eq:Gev}) would be  unbounded.
The weak solution in the case of different mobilities $m(\phi)\neq n(\phi)$ can be defined in the following way.
\begin{Definition}
	\label{defn:weak_sol_deg_full_full}	
	Let the initial data be given  $$(\phi_0,q_0,\u_0,\CC_0)\in H^1(\Omega)\times L^2(\Omega)\times H\times L^2(\Omega)^{2\times2}.$$
	The quadruple $(\phi,q,\J,\u,\CC)$ is called a weak solution of (\ref{eq:full_model}) if
	\begin{align*}	
	&\phi \in L^{\infty}(0,T;H^1(\Omega))\cap L^2(0,T;H^2(\Omega)),& &q,\CC\in L^\infty(0,T;L^2(\Omega))\cap L^2(0,T;H^1(\Omega)), \\
	&\u\in L^{\infty}(0,T;H)\cap L^2(0,T;V),& &\J=n(\phi)\Jh,\Jh\in L^2(0,T;L^2(\Omega))
	\end{align*}
	and
	\begin{align*}
	&\phi^\prime \equiv \frac{\partial \phi}{\partial t} \in L^{2}(0,T;H^{-1}(\Omega)),&
& q^\prime \equiv \frac{\partial q}{\partial t}, \CC^\prime \equiv \frac{\partial \CC}{\partial t} \in L^{4/3}(0,T;H^{-1}(\Omega)),&
&\u^\prime \equiv \frac{\partial u}{\partial t}\in L^2(0,T;V^{*}).
	\end{align*}
Further, for any test function $(\psi,\zeta,\bxi,\vv,\DD)\in H^1(\Omega)^2\times H^1(\Omega)\cap L^\infty(\Omega)\times V\times H^1(\Omega)^{2\times 2}$ and almost every $t\in(0,T)$ it holds
	\begin{align}
	&\int_\Omega\frac{\partial \phi}{\partial t}\psi \dx + \int_\Omega\(\u\cdot\nabla\phi\)\psi \dx + \int_\Omega \J\cdot\nabla\psi \dx - \int_\Omega n(\phi)\nabla\big(A(\phi)q\big)\cdot\na\psi \dx =0 \nonumber\\
	&\int_\Omega\frac{\partial q}{\partial t}\zeta \dx + \int_\Omega\(\u\cdot\nabla q\)\zeta \dx + \int_\Omega\frac{q\zeta}{\tau(\phi)}\dx +  \int_\Omega\nabla\big(A(\phi)q\big)\cdot\na\big(A(\phi)\zeta\big)\dx + \int_\Omega \varepsilon_1\nabla q\cdot\nabla\zeta\dx    \nonumber\\
	&\hspace{7em} = \int_\Omega \Jh\cdot\nabla\big(A(\phi)\zeta\big) \dx \nonumber\\
	&\int_\Omega \J\cdot\bxi \dx = c_0\int_\Omega \Delta\phi\div{m(\phi)\bxi}\dx + \int_\Omega m(\phi)F^{\prime\prime}(\phi)\nabla\phi\cdot\bxi \dx \\
	&\int_\Omega\frac{\partial\u}{\partial t}\cdot\vv \dx + \int_\Omega(\u\cdot\na)\u\cdot\vv\dx + \int_\Omega\eta(\phi)\Du:\mathrm{D}\vv \dx + \int_\Omega\TT:\nabla\vv \dx + \int_\Omega c_0\Delta\phi\nabla\phi\cdot\vv \dx = 0 \nonumber\\
	&\int_\Omega\frac{\partial\CC}{\partial t}:\DD \dx + \int_\Omega(\u\cdot\na)\CC:\DD\dx  - \int_\Omega\Big[(\nabla\u)\CC + \CC(\na\u)^T\Big]:\DD \dx + \varepsilon_2\int_\Omega\nabla\CC:\nabla\DD \dx & \nonumber\\
	&\hspace{7em}=  - \int_\Omega h(\phi)\tr{\CC}^2\CC:\DD \dx + \int_\Omega h(\phi)\tr{\CC}\I:\DD \dx. \nonumber&
	\end{align}
Furthermore, the initial data $(\phi(0),q(0),\u(0),\CC(0))=(\phi_0,q_0,\u_0,\CC_0)$ are attained.
\end{Definition}

Here the only difference is the appearance of $\J$ and $\Jh$ in the definition of a weak solution. In the energy inequality we recover the full cross-diffusion difference with the term $(\Jh - \na(A(\phi)q))^2$.
\begin{align}
&\left(\int_\Omega \frac{c_0}{2}\snorm{\nabla\phi(t)}^2 + F(\phi(t)) + \frac{1}{2}|q(t)|^2 +\frac{1}{2}\snorm{\u(t)}^2 + \frac{1}{4}\snorm*{\CC(t)}^2 \dx\right) \nonumber \\
&\leq -\int_0^t\int_\Omega \(\Jh - \na\big(A(\phi)q\big)\)^2\dx\dta -\int_0^t\int_\Omega \frac{1}{\tau(\phi)}q^2 \dx\dta -\varepsilon_1\int_0^t\int_\Omega \snorm*{\nabla q}^2 \dx\dta \nonumber \\
&-\int_0^t\int_\Omega \eta(\phi)\snorm*{\Du}^2\dx\dta - \frac{\varepsilon_2}{2}\int_0^t\int_\Omega\snorm*{\nabla\CC}^2\dx\dta - \frac{1}{2}\int_0^t\int_\Omega h(\phi)\snorm*{\trC\CC}^2\dx\dta \label{eq:energy_full_intint}  \\
&+ \frac{1}{2}\int_0^t\int_\Omega h(\phi)\snorm*{\trC}^2\dx\dta + \left(\int_\Omega \frac{c_0}{2}\snorm{\nabla\phi_0}^2 + F(\phi_0) + \frac{1}{2}|q_0|^2 +\frac{1}{2}\snorm{\u_0}^2 + \frac{1}{4}\snorm*{\CC_0}^2 \dx\right). \nonumber
\end{align}

\begin{Remark}
	We want to discuss the consequence of the $L^\infty$ bounds in Assumption~\ref{ass:deg2}, cf.~\eqref{eq:twodegaass}. We  write $n(s)$ as $n(s)=s^\beta(1-s)^\beta N(s)$ for some $\beta \geq 1$ and a bounded positive smooth function $N$. Since $n^{-1}(s)$ is unbounded for $s\in\{0,1\}$ we need that $A,A^\prime$ are vanishing at $\{0,1\}$ with some rate $\alpha\geq\beta$. This implies that locally around $\{0,1\}$ we can set $A(s)=s^\alpha(1-s)^\alpha A_{b}(s)$, for a bounded positive smooth function $A_b$.
	\begin{align}
	\frac{A(s)}{n(s)}&= \Big(s(1-s)\Big)^{\alpha-\beta}\frac{A_{b}(s)}{N(s)}, \nonumber\\
	 \frac{A^\prime(s)}{n(s)}&= \alpha\Big(s(1-s)\Big)^{\alpha-1-\beta}(1-2s)\frac{A_{b}(s)}{N(s)} + \Big(s(1-s)\Big)^{\alpha-\beta}\frac{A^\prime_{b}(s)}{N(s)}. \label{eq:Ablowup}
	\end{align}
	This implies that $c_1s^{\beta+1}\leq A(s)\leq c_2s^{\beta+1}$ as $s\to 0$ or $s\to 1$.
\end{Remark}
From the physical point of view there is no difference to consider the above modification of the function $A$. If the volume fraction fulfills $\phi=1$, then we have in a mixture only pure polymer without any solvent. In this case the evolution of $\phi$ reduces to the transport along the streamlines.  Therefore after expanding the gradient term $n(\phi)\na(A(\phi)q)$, this should go to zero for arbitrary values of $q$. Numerical simulations confirm that the numerical results for a sufficiently small and smooth cut-off of $A(\phi)$ coincide with the simulations for an original $A$.

\section{Numerical Simulations}
In this section we illustrate the behavior of our viscoelastic phase separation model to three-dimensional flows.
{\cred As already mentioned before the viscoelastic phase separation is a complex dynamical process describing phase-separation of a dynamically asymmetric mixture, which is composed of fast and slow phases. This leads to structure formation phenomena, such as transient formation of network-like structures of a polymer-rich phase and its volume shrinking. As we observe below numerical experiments confirm these rich dynamical processes.}

A numerical scheme is based on the Lagrange-Galerkin finite element method from \cite{LukacovaMedvidova.2017b,LukacovaMedvidova.2017c}, see also \cite{Brunk.}. We decompose the computational domain $\Omega=[0,24]^3$ into tetrahedrons. The numerical solution is based on the first order piecewise polynomial approximation. In time we adopted the characteristic scheme to approximate the material derivative.
The following experiment is similar to Experiment~2 from \cite{Brunk.} for the Flory-Huggins potential with a degenerate mobility. The following functions and parameters will be used
\begin{align*}
&m(\phi)=\phi^2(1-\phi)^2, n(\phi)=\phi(1-\phi), A(\phi)=\frac{1}{2}\tanh\(10^{3}\cdot\left[\cot(\pi\phi^*)- \cot(\pi\phi)\right] \),\\
&\tau(\phi)=(5\phi^2)^{-1}, h(\phi)=(5\phi^2)^{-1}, \eta(\phi) = 2+\phi^2, c_0=1, \varepsilon=0.1, \phi^* = 0.4, \\
&n_p=n_s=1, \chi=28/11.
\end{align*}
\textbf{Experiment 1:} In this test we set the initial data to $\phi_0(x) = 0.4 + \xi(x), q_0=0, \u_0=\mathbf{0}, \CC_0=\frac{1}{\sqrt{3}}\I$, where $\xi(x)$ is a random perturbation from $[-10^{-3},10^{-3}]$.\\[0.5em]
Figures \ref{fig:exp1phii}-\ref{fig:exp1en} present the time evolution of numerical solutions $\phi,q,p,\mu$ and $\snorm*{\u}_2$. Similarly as in \cite{Brunk.} we can recognize the frozen phase $(t \approx 20)$, the elastic regime with solvent-rich droplets $(t \approx 130)$, the volume shrinking phase $(t\approx 200)$ and network pattern $(t\approx 500)$. The last regime requires much longer simulations and is therefore not presented here. \\
We present isosurfaces of $\phi$ and $\snorm*{\u}_2$. For the other variables we plot three chosen cuts. Comparing them with the results of two-dimensional simulations presented in \cite{Brunk.} we can see that they match quite well in terms of structure. {\cred Two-dimensional cuts shown in Figure~\ref{fig_exp1phis} are in good agreement with physical experiments, presented as two-dimensional snapshots in \cite{Tanaka.}.  We can clearly recognize the phase inversion followed by the formation of network-structures. During the phase inversion solvent-rich droplets
expand very fast leading to a change in the observable dominant phase. In our simulations phase inversion happens between $t \approx 80$ and $t\approx 200$, see Figure~\ref{fig_exp1phis}.}

The speedup in the time evolution in comparison with two dimensional simulations can be explained in the following way. First, we consider a much smaller domain without rescaling $c_0$, which is directly connected to the timescale. Second, we have rescaled $A(\phi)$ to a more physically reasonable case $A(\phi)\in[0,1]$ instead of $A(\phi)\in[1,2]$, cf.~\cite{Brunk.}. Finally, we can observe from Figure \ref{fig:exp1en} that the scheme is practically energy-stable and mass conservative.
\begin{figure}[H]
	\centering
	\begin{tabular}{cccc}
		\hspace{-0.7cm}\subfloat[][]{\includegraphics[trim={0.8cm 0.4cm 0.8cm 0.2cm},clip,scale=0.45]{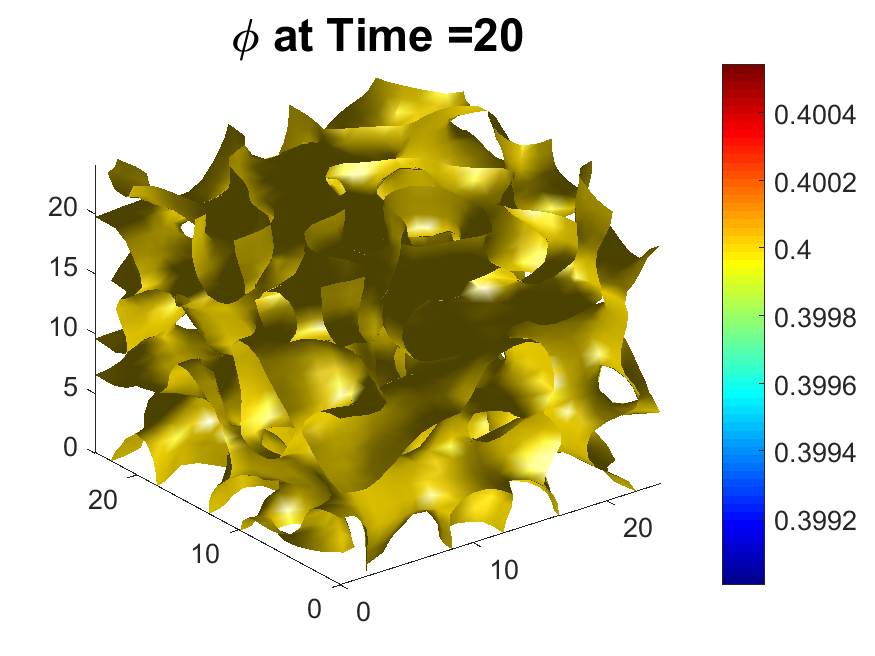}} &
		\subfloat[][]{\includegraphics[trim={0.8cm 0.4cm 0.8cm 0.2cm},clip,scale=0.45]{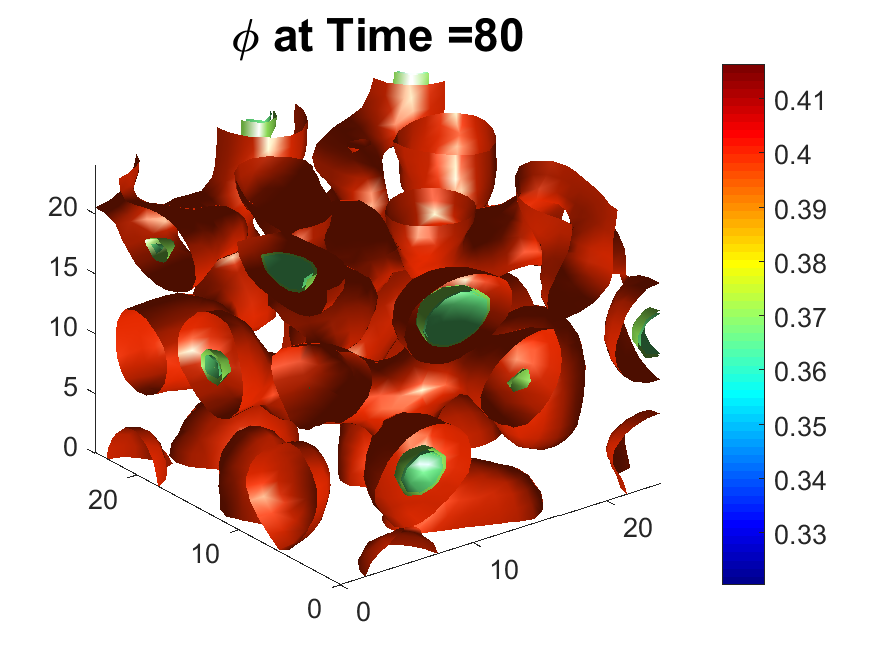}} &
		\subfloat[][]{\includegraphics[trim={0.8cm 0.4cm 0.8cm 0.2cm},clip,scale=0.45]{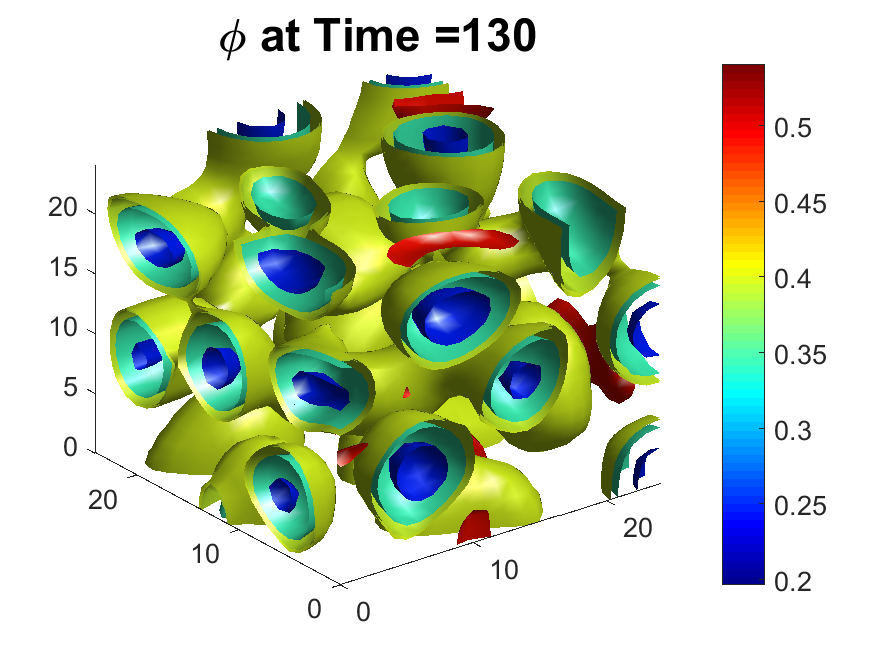}} \\
		\hspace{-0.7cm}\subfloat[][]{\includegraphics[trim={0.8cm 0.4cm 0.8cm 0.2cm},clip,scale=0.45]{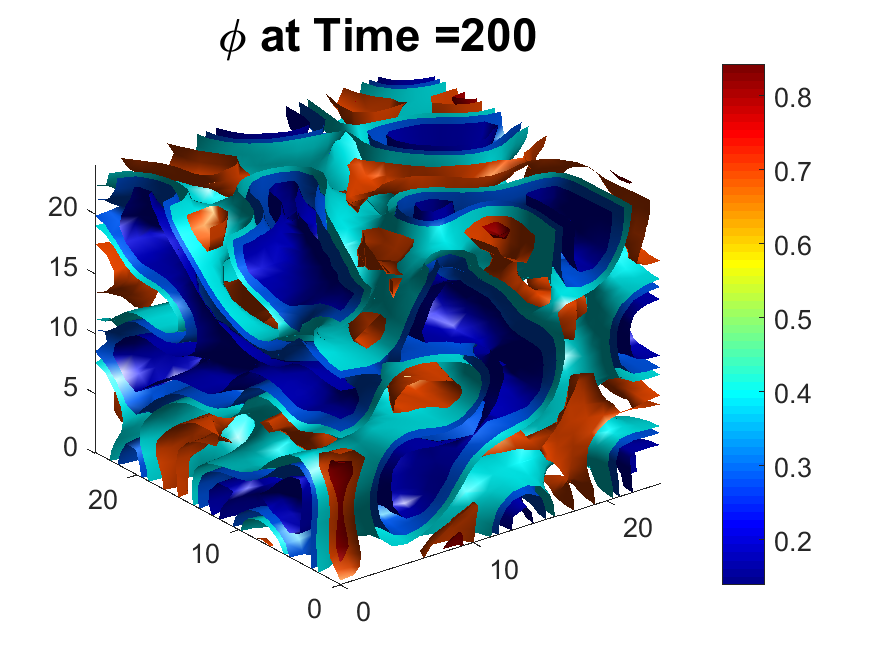}} &
		\subfloat[][]{\includegraphics[trim={0.8cm 0.4cm 0.8cm 0.2cm},clip,scale=0.45]{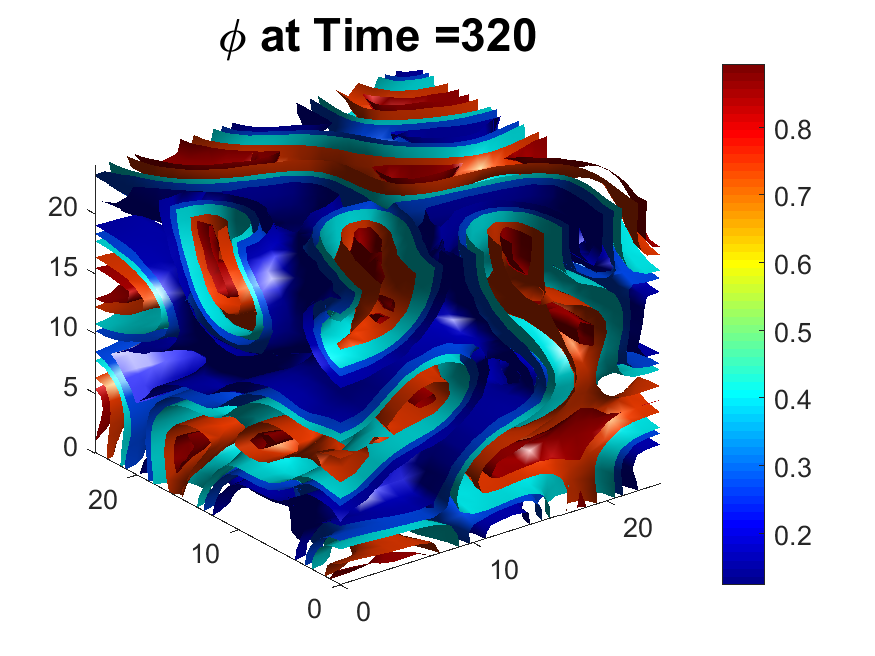}} &
		\subfloat[][]{\includegraphics[trim={0.8cm 0.4cm 0.8cm 0.2cm},clip,scale=0.45]{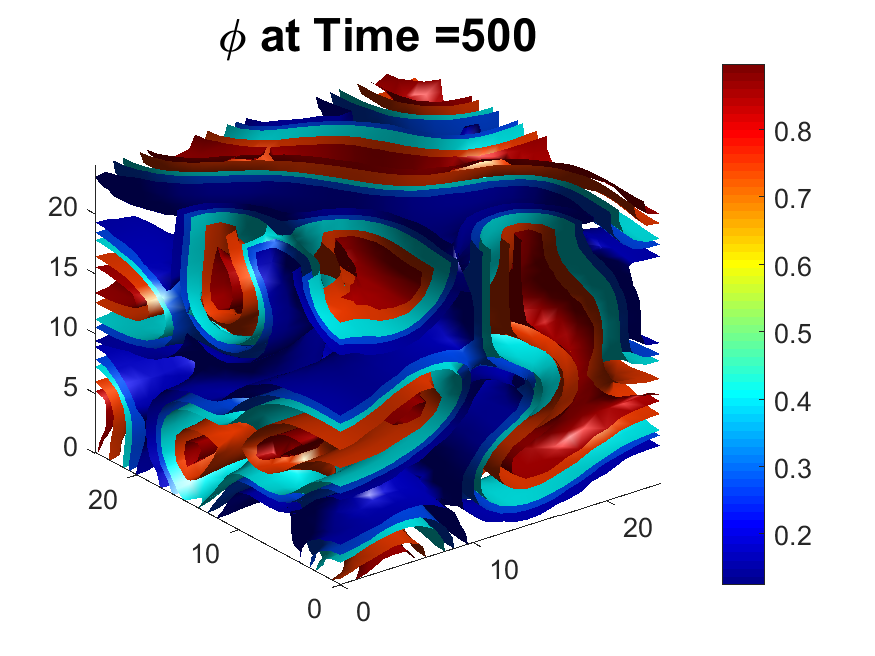}}
	\end{tabular}
	\caption{Isosurface: Spinodal decomposition, time evolution of the volume fraction $\phi$.}
	\label{fig:exp1phii}
\end{figure}

\begin{figure}[H]
	\centering
	\begin{tabular}{ccc}
		\hspace{-0.7cm}\subfloat[][]{\includegraphics[trim={0.8cm 0.4cm 0.3cm 0.2cm},clip,scale=0.43]{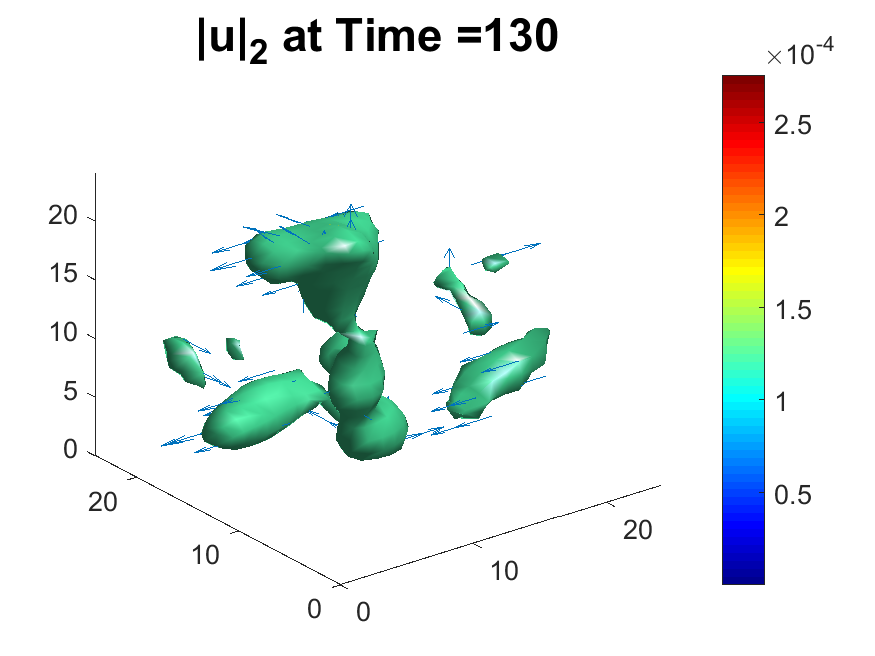}} &
		\subfloat[][]{\includegraphics[trim={0.8cm 0.4cm 0.3cm 0.2cm},clip,scale=0.43]{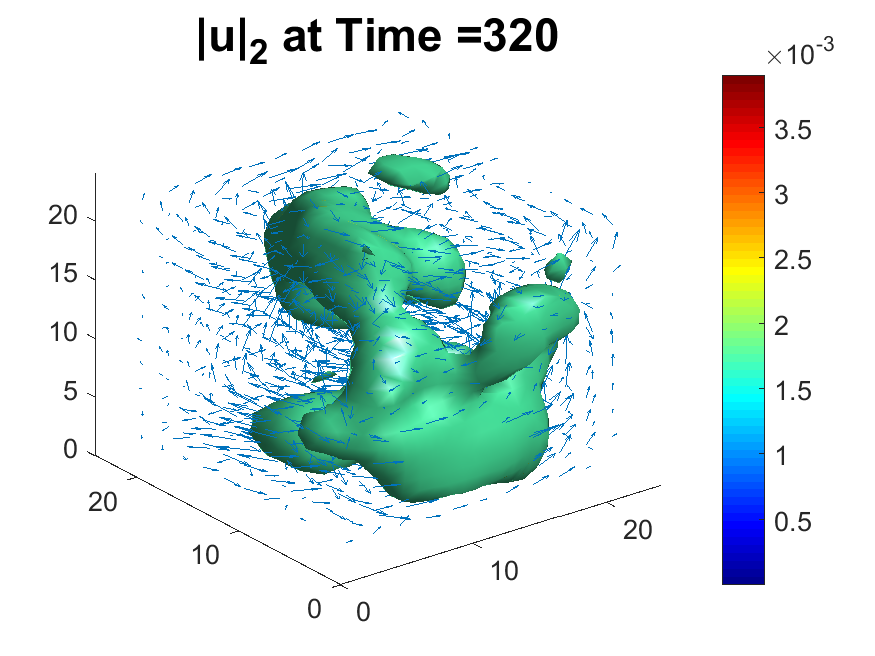}} &
		\subfloat[][]{\includegraphics[trim={0.8cm 0.4cm 0.3cm 0.2cm},clip,scale=0.43]{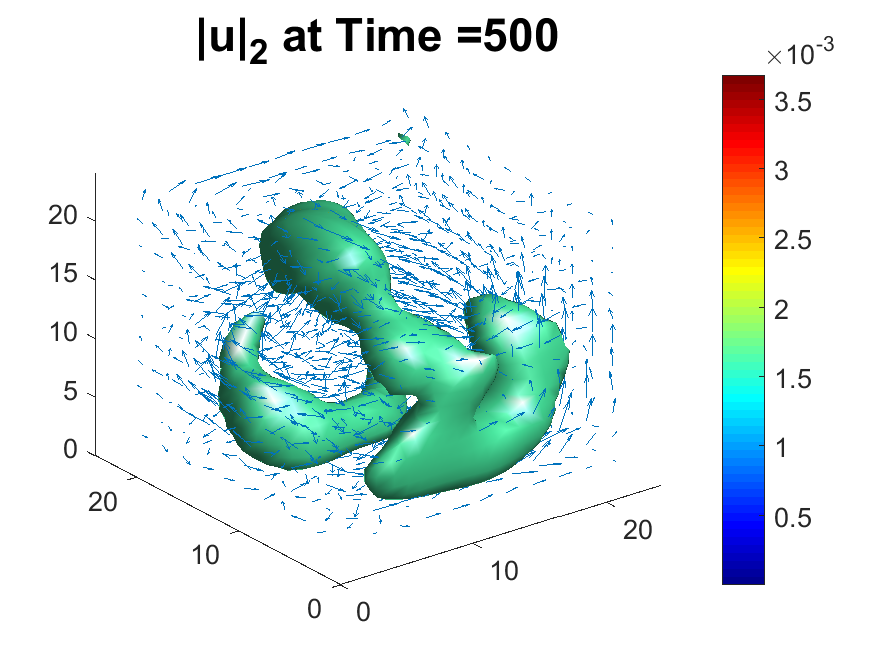}}\\
	\end{tabular}
	\caption{Isosurface: Spinodal decomposition, time evolution of the velocity norm $|\u|_2$ with the vector field.}
	\label{fig_exp1magui}
\end{figure}


\begin{figure}[H]
	\centering
	\begin{tabular}{cccc}
		\hspace{-0.7cm}\subfloat[][]{\includegraphics[trim={0.8cm 0.5cm 0.8cm 0.4cm},clip,scale=0.43]{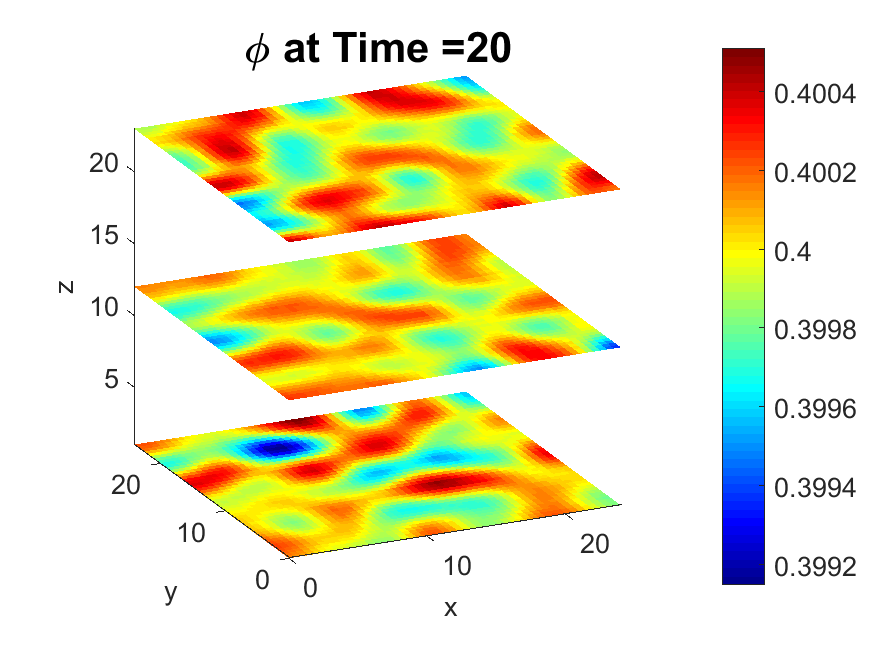}} &
		\subfloat[][]{\includegraphics[trim={0.8cm 0.5cm 0.8cm 0.4cm},clip,scale=0.43]{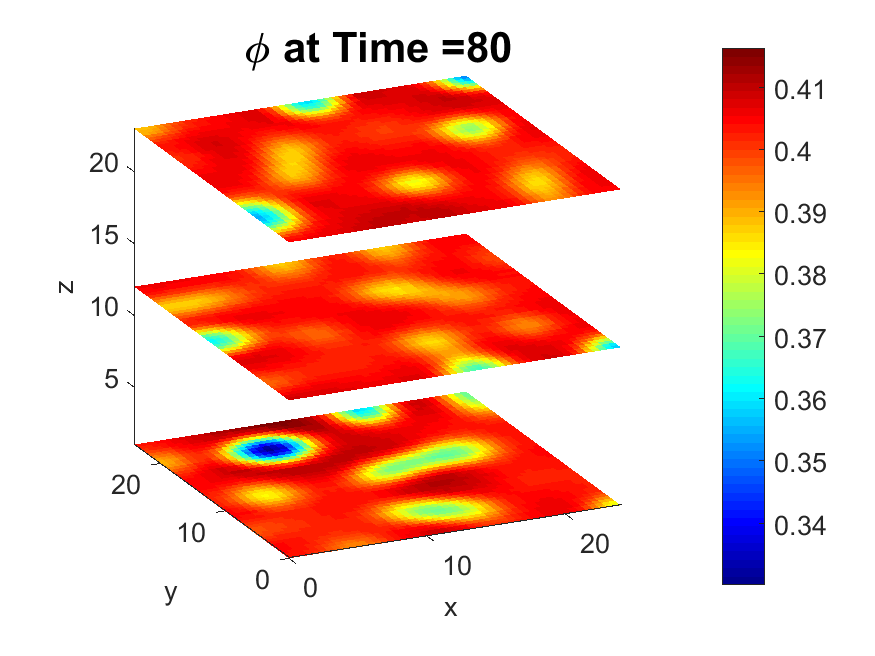}} &
		\subfloat[][]{\includegraphics[trim={0.8cm 0.5cm 0.8cm 0.4cm},clip,scale=0.43]{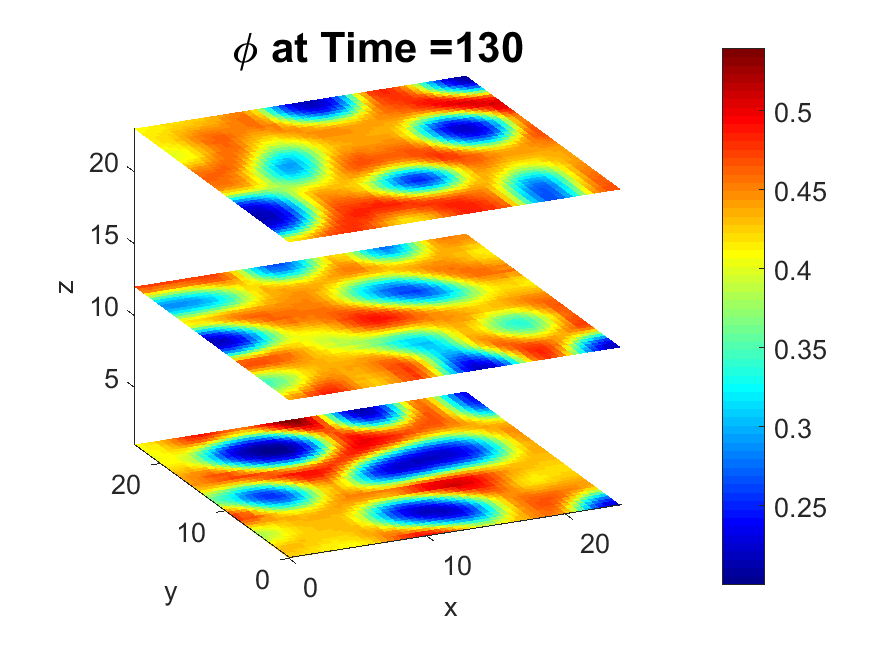}} \\
		\hspace{-0.7cm}\subfloat[][]{\includegraphics[trim={0.8cm 0.5cm 0.8cm 0.4cm},clip,scale=0.43]{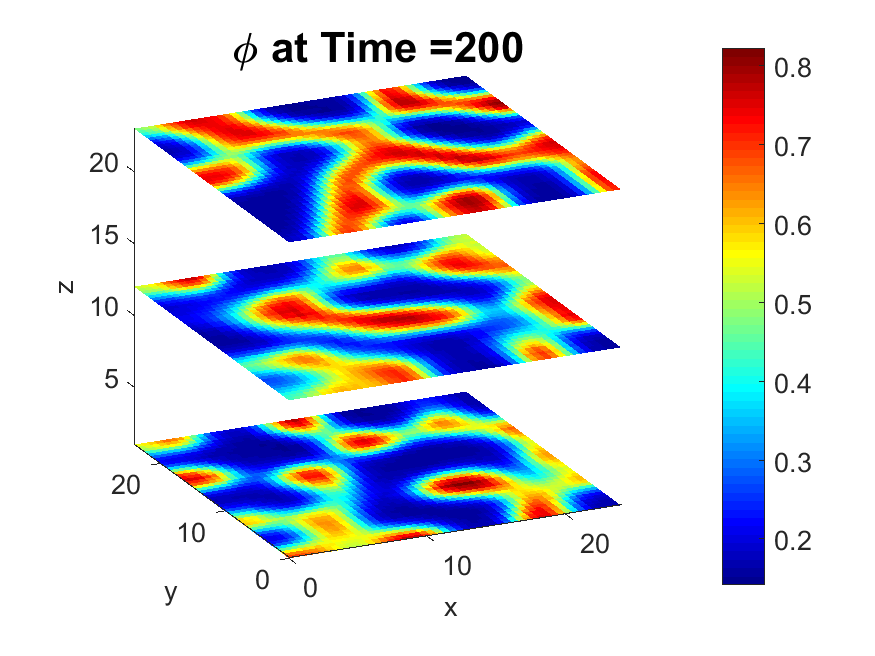}} &
		\subfloat[][]{\includegraphics[trim={0.8cm 0.5cm 0.8cm 0.4cm},clip,scale=0.43]{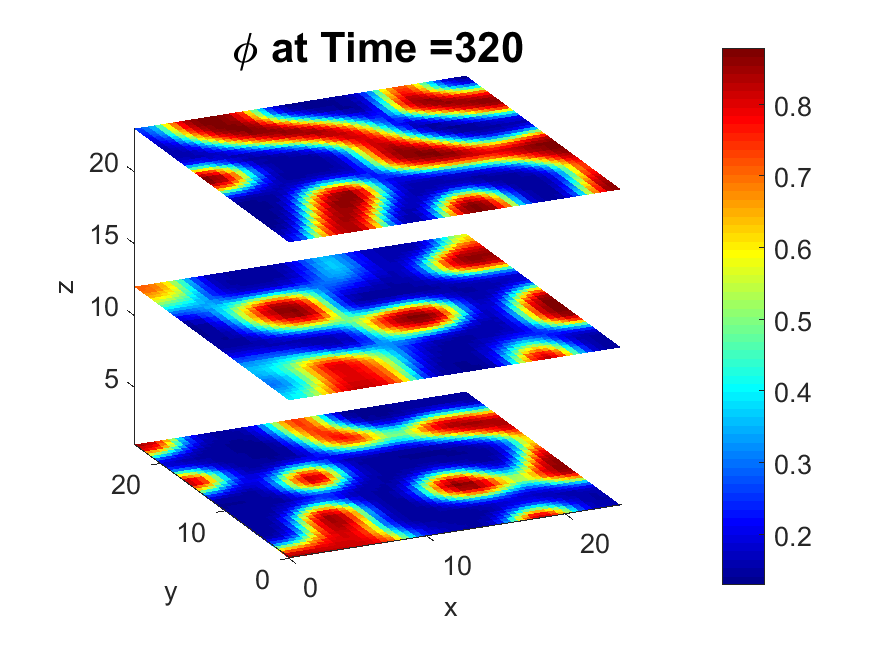}} &
		\subfloat[][]{\includegraphics[trim={0.8cm 0.5cm 0.8cm 0.4cm},clip,scale=0.43]{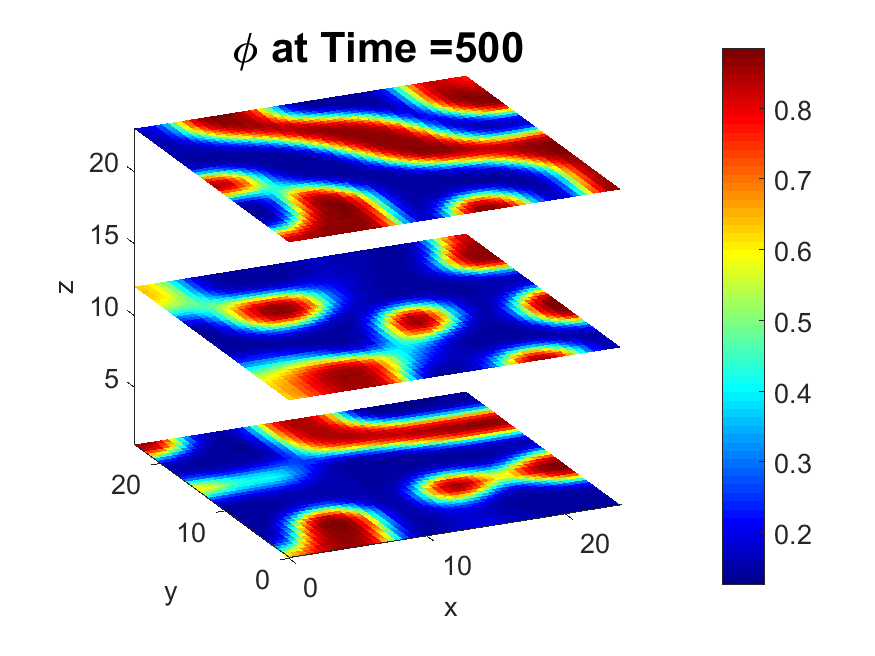}}
	\end{tabular}
	\caption{Slice: Spinodal decomposition, time evolution of the volume fraction $\phi$.}
	\label{fig_exp1phis}
\end{figure}

\begin{figure}[ht]
	\centering
	\begin{tabular}{ccc}
		\hspace{-0.7cm}\subfloat[][]{\includegraphics[trim={0.7cm 0.5cm 0.6cm 0.00cm},clip,scale=0.43]{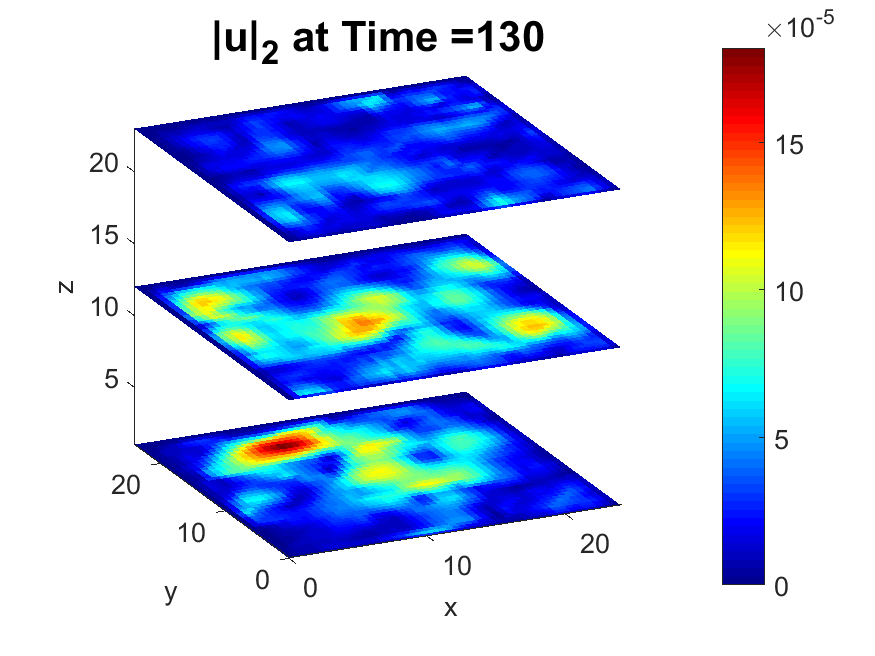}} &
	    \subfloat[][]{\includegraphics[trim={0.7cm 0.5cm 0.6cm 0.00cm},clip,scale=0.43]{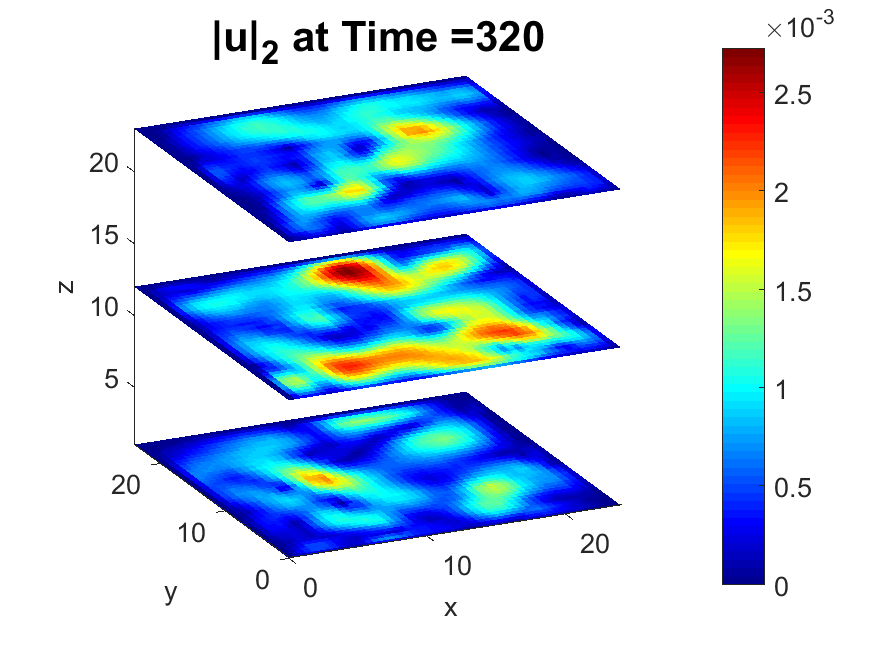}} &
		\subfloat[][]{\includegraphics[trim={0.7cm 0.5cm 0.6cm 0.00cm},clip,scale=0.43]{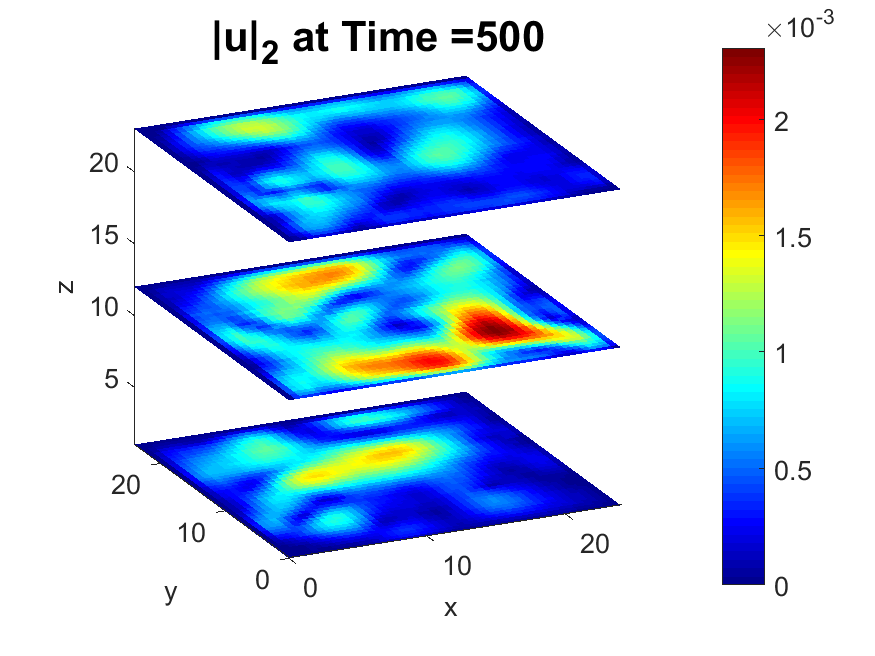}}\\
	\end{tabular}
	\caption{Slice: Spinodal decomposition, time evolution of the velocity norm $|\u|_2$.}
	\label{fig_exp1magus}
\end{figure}

\begin{figure}[h!]
	\centering
	\begin{tabular}{ccc}	
	    \hspace{-0.7cm}\subfloat[][]{\includegraphics[trim={0.7cm 0.5cm 0.3cm 0.00cm},clip,scale=0.43]{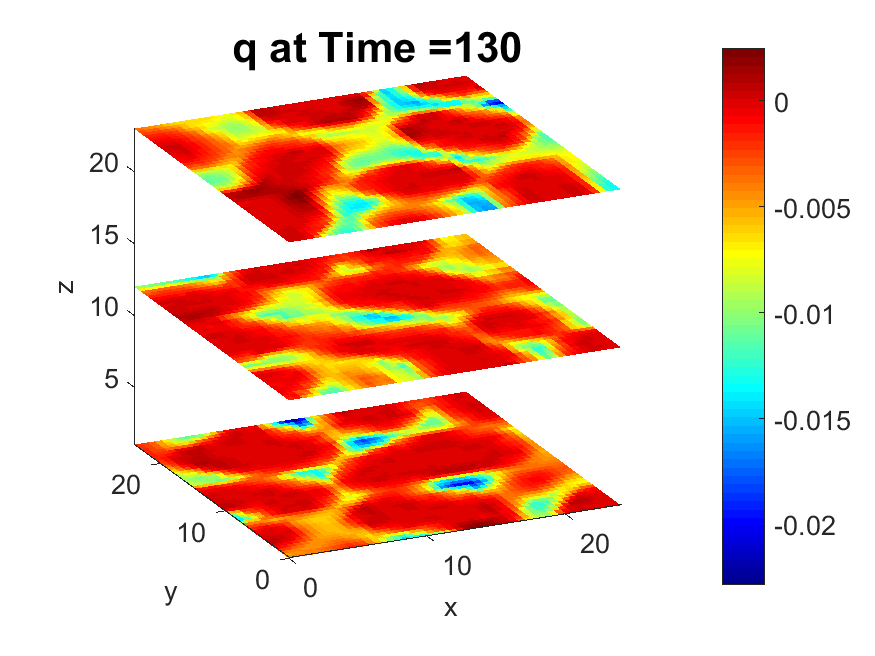}} &
		\subfloat[][]{\includegraphics[trim={0.7cm 0.5cm 0.3cm 0.00cm},clip,scale=0.43]{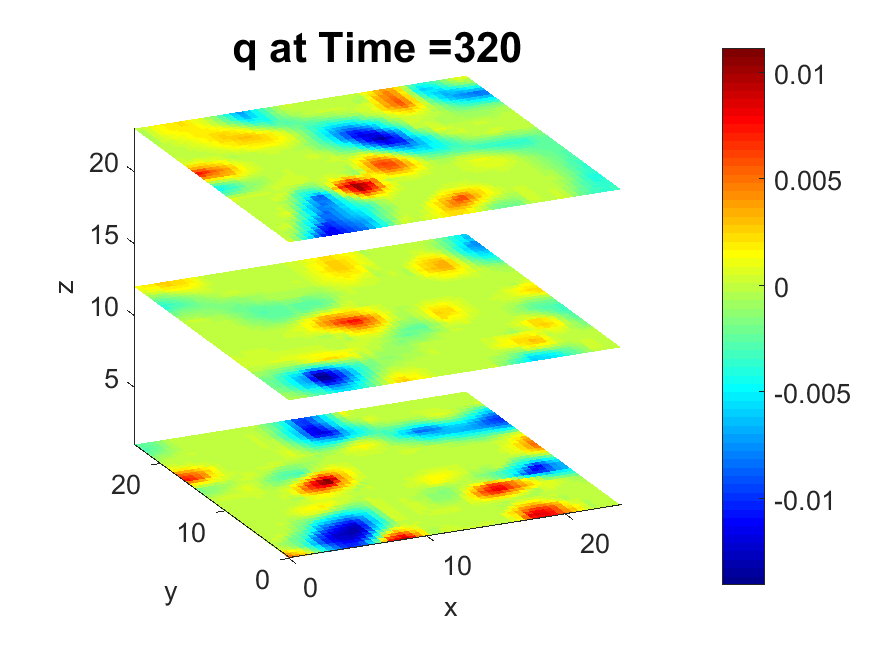}} &
		\subfloat[][]{\includegraphics[trim={0.7cm 0.5cm 0.3cm 0.00cm},clip,scale=0.43]{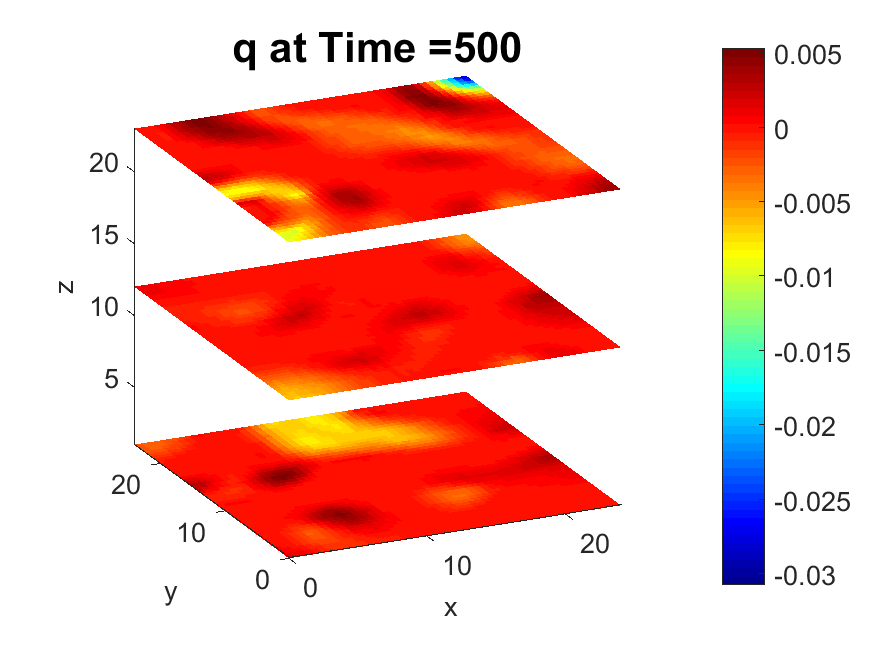}} \\
		\hspace{-0.7cm}\subfloat[][]{\includegraphics[trim={0.7cm 0.5cm 0.3cm 0.00cm},clip,scale=0.43]{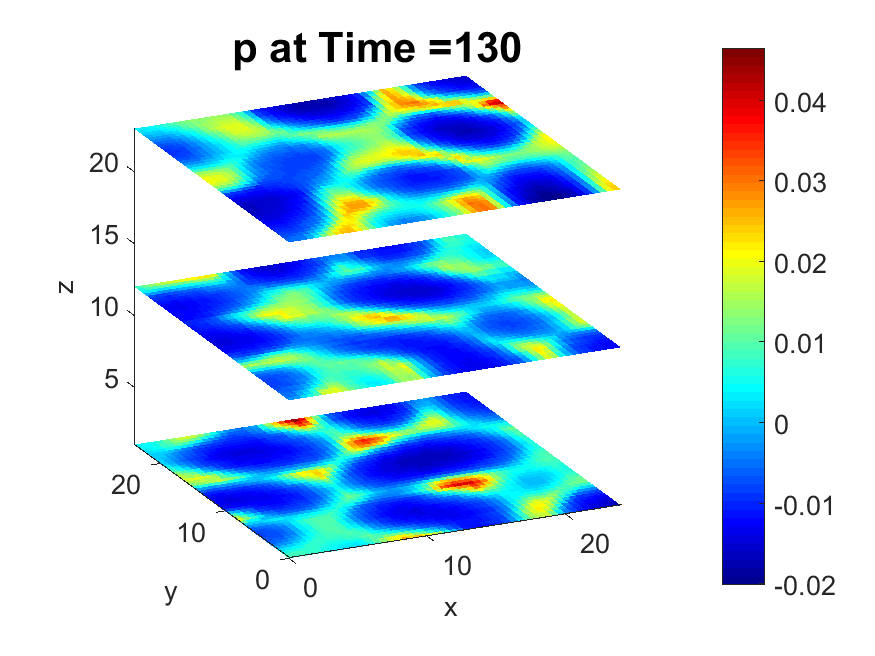}} &
		\subfloat[][]{\includegraphics[trim={0.7cm 0.5cm 0.3cm 0.00cm},clip,scale=0.42]{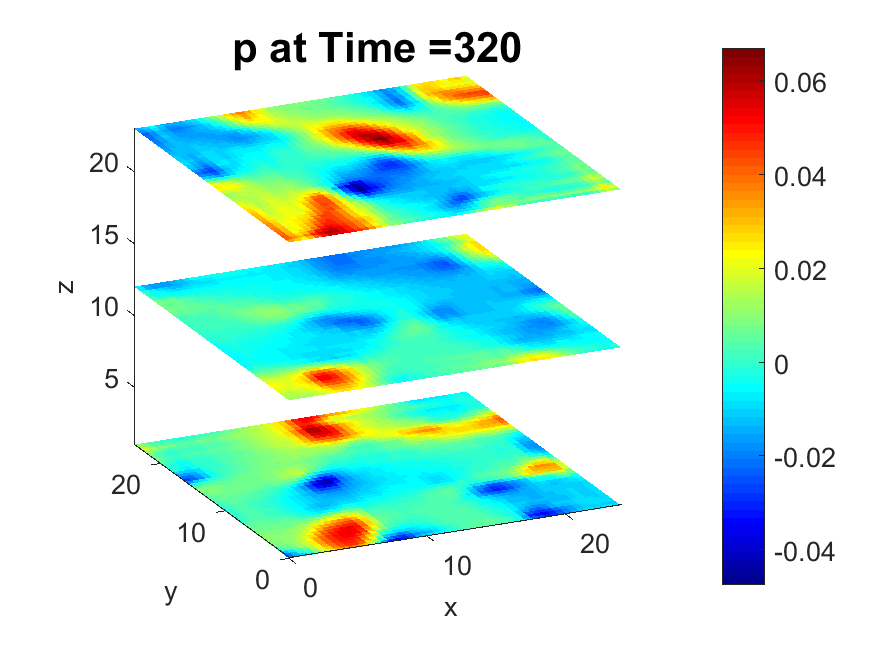}} &
		\subfloat[][]{\includegraphics[trim={0.7cm 0.5cm 0.3cm 0.00cm},clip,scale=0.43]{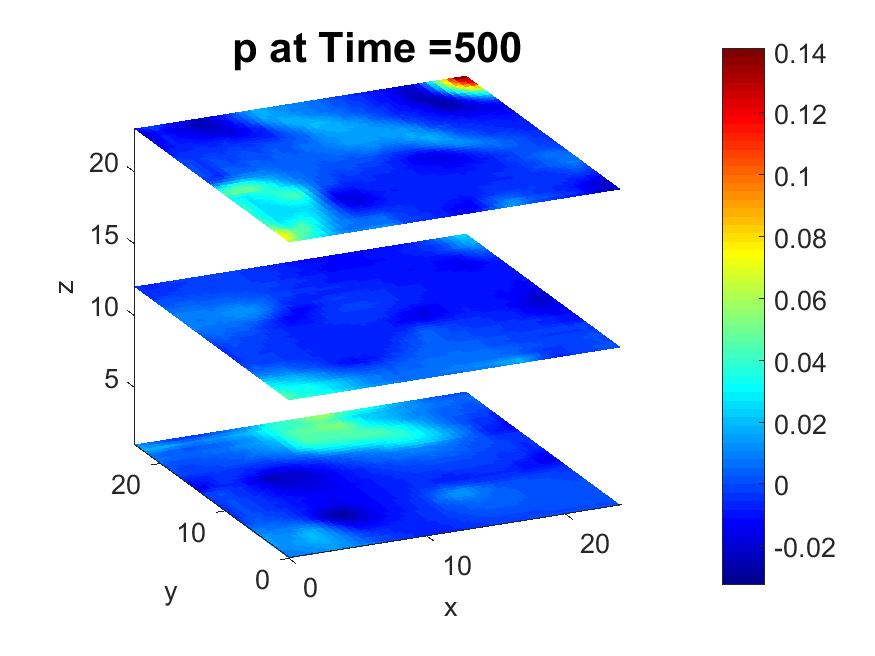}} \\	
		\hspace{-0.7cm}\subfloat[][]{\includegraphics[trim={0.7cm 0.5cm 0.3cm 0.00cm},clip,scale=0.43]{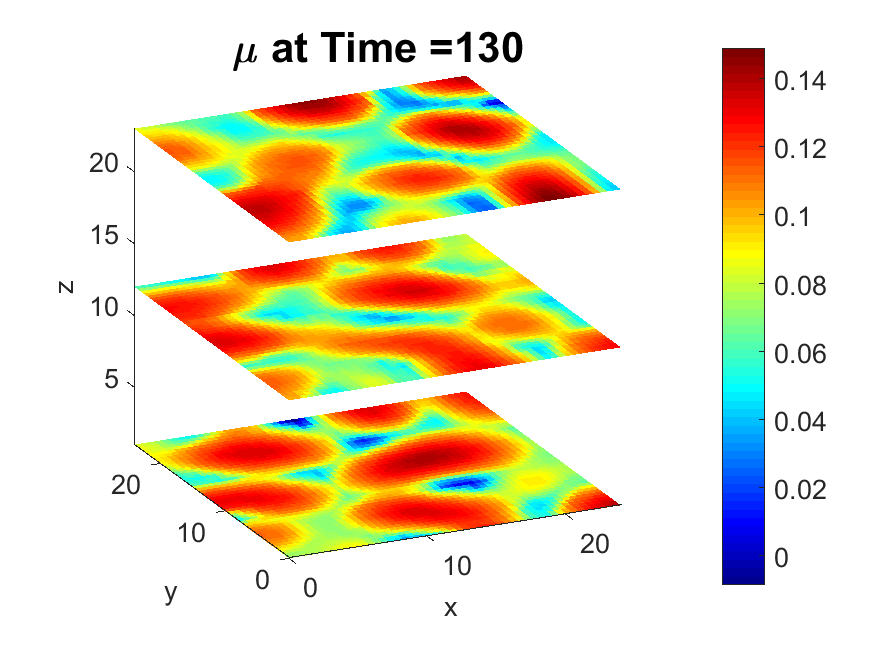}} &
		\subfloat[][]{\includegraphics[trim={0.7cm 0.5cm 0.3cm 0.00cm},clip,scale=0.43]{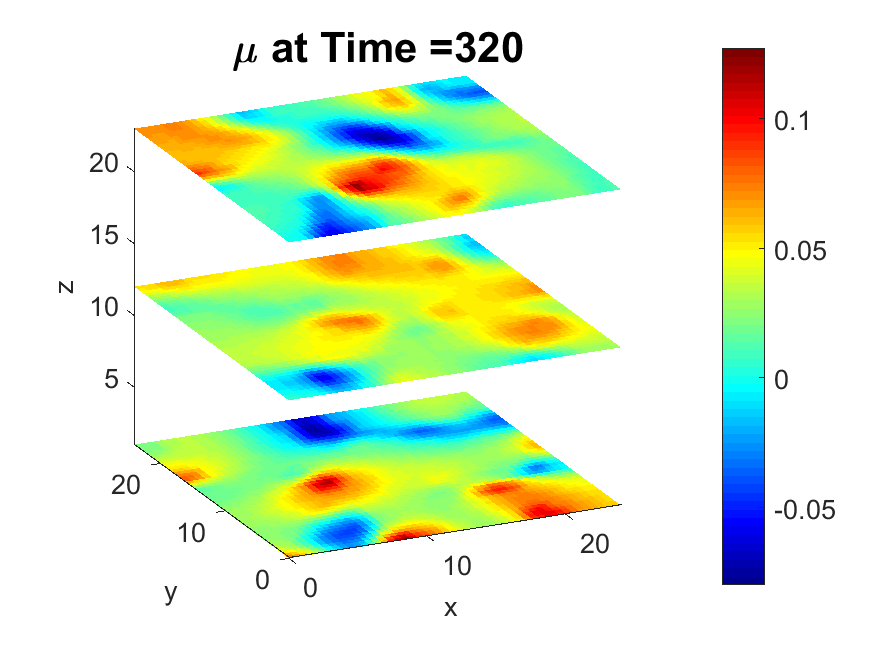}} &
		\subfloat[][]{\includegraphics[trim={0.7cm 0.5cm 0.3cm 0.00cm},clip,scale=0.43]{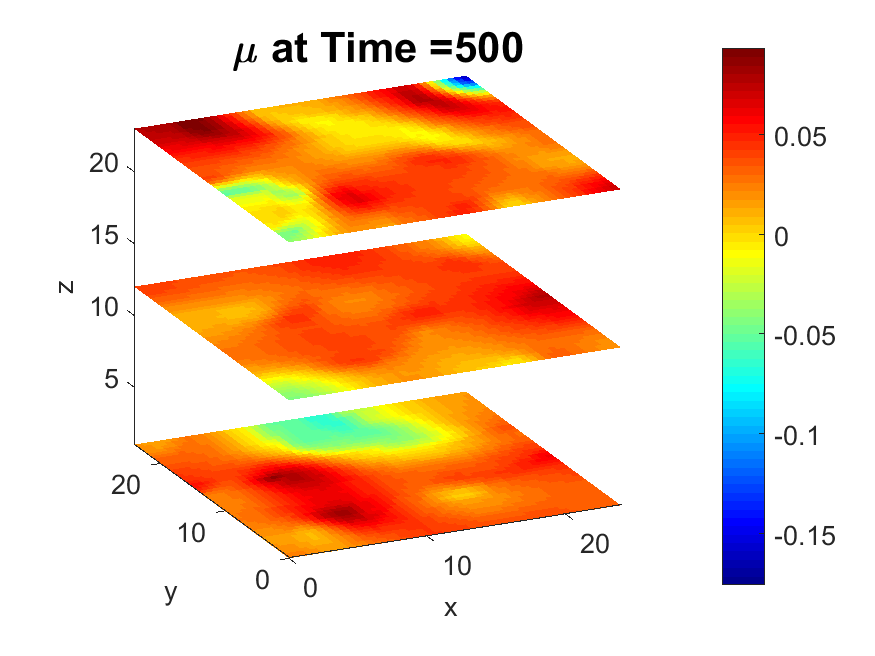}}
	\end{tabular}
	\caption{Slice: Spinodal decomposition, time evolution of the bulk stress $q$ (top), pressure $p$ (middle) and chemical potential $\mu$ (bottom).}
	\label{fig_exp1qpmus}
\end{figure}


\begin{figure}[H]
	\centering
	\includegraphics[trim={5.6cm 0.0cm 0.0cm 0.0cm},clip,scale=0.36]{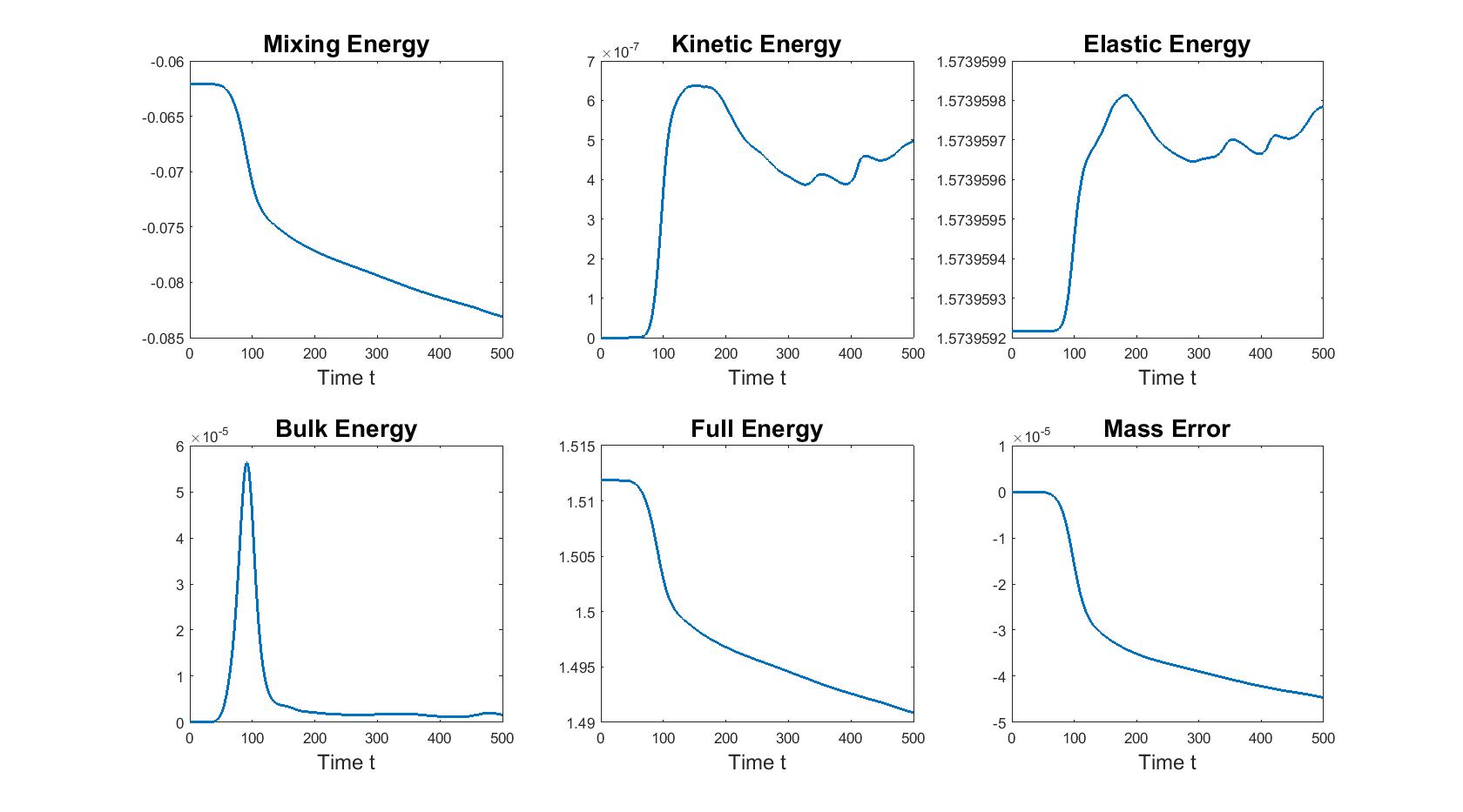}
	\caption{Time evolution of the total energy $E_{tot}$ and the corresponding energy components. The last picture demonstrates that the numerical scheme preserves mass $\frac{1}{\snorm*{\Omega}}\int \phi$ up to small error $\frac{1}{\snorm*{\Omega}}\int \(\phi(x,0) - \phi(x,t)\) \dx$ of the order $10^{-5}$.}
	\label{fig:exp1en}
\end{figure}

\section{Conclusion}
In this paper we have proved the existence of a global in time weak solution to the viscoelastic phase separation model \refeq{eq:full_model},
cf.~Theorem \ref{theo:deg1}. We have extended the approach of \cite{Abels.2013,Boyer.1999,Elliott.1996} for the degenerate mobilities in the Cahn-Hilliard framework to a strongly coupled nonlinear cross-diffusive Cahn-Hilliard system arising in our viscoelastic phase separation model. The crucial difference between the models studied in literature and our model is an additional equation for the bulk stress. Moreover, the coupling terms can degenerate. Using Assumptions \ref{ass:deg} or \ref{ass:deg2} on the nonlinear parameter function $A(\phi)$,
cf.~\eqref{eq:twodegaass}, we were able to extend the approach of \cite{Abels.2013,Boyer.1999,Elliott.1996} to a more complex model with a cross-diffusion coupling. The behavior of the viscoelastic phase separation model was illustrated in Section 10.
As far as we know this is the first result in literature where the viscoelastic phase separation with degenerate mobilities and a singular potential has been analysed rigorously.

\section*{Acknowledgment}
This research was supported by the German Science Foundation (DFG) under the Collaborative Research Center TRR~146 Multiscale Simulation Methods for Soft Matters (Project~C3) and partially by the
VEGA grant~1/0684/17. We would like to thank B.~D\"unweg, D.~Spiller and J.~Ka{\softt}uchov\'a for fruitful discussions on the topic.

\bibliography{paper_bib}
\bibliographystyle{abbrv}

\end{document}